\newtheorem{Theorem}{Theorem}[section]
\newtheorem{Lemma}[Theorem]{Lemma}
\newtheorem{Corollary}[Theorem]{Corollary}
\theoremstyle{definition}
\newtheorem{Example}[Theorem]{Example}
\newtheorem{Remark}[Theorem]{Remark}
\renewcommand{\leq}{\leqslant}
\renewcommand{\geq}{\geqslant}
\newcommand{\B}{{\ensuremath{\mathcal{B}}}}
\newcommand{\E}{\mathcal{E}}
\def \mod{\pmod}
\let\oldproofname=\proofname
\renewcommand{\proofname}{\rm\bf{\oldproofname}}
\title{Bounds on data limits for all-to-all comparison from combinatorial designs}
\author{Joanne Hall\thanks{School of Science, RMIT University, Melbourne VIC 3001, Australia} \qquad Daniel Horsley\thanks{School of Mathematics, Monash University, Clayton VIC 3800, Australia} \qquad Douglas R. Stinson\thanks{David R. Cheriton School of Computer Science, University of Waterloo, Waterloo ON, N2L 3G1, Canada}}
\date{}
\begin{document}
\setstretch{1.1}
\maketitle

\begin{abstract}
In situations where every item in a data set must be compared with every other item in the set, it may be desirable to store the data across a number of machines in such a way that any two data items are stored together on at least one machine. One way to evaluate the efficiency of such a distribution is by the largest fraction of the data it requires to be allocated to any one machine. The \emph{all-to-all comparison (ATAC) data limit for $m$ machines} is a measure of the minimum of this value across all possible such distributions. In this paper we further the study of ATAC data limits. We observe relationships between them and the previously studied combinatorial parameters of \emph{fractional matching numbers} and \emph{covering numbers}. We also prove a lower bound on the ATAC data limit that improves on one of Hall, Kelly and Tian, and examine the special cases where equality in this bound is possible. Finally, we investigate the data limits achievable using various classes of combinatorial designs. In particular, we examine the cases of transversal designs and projective Hjelmslev planes.
\end{abstract}

\section{Introduction}

A \emph{hypergraph} is a pair $(X,\E)$ where $X$ is a finite set and $\E$ is a finite multiset of subsets of $X$. Elements of $X$ are called \emph{vertices} and elements of $\E$ are called \emph{edges}. If $|E|=k$ for each $E \in \E$, then the hypergraph is said to be \emph{$k$-uniform}. A \emph{covering design} is a hypergraph $(X,\E)$ with the property that each pair of elements of $X$ occur together in at least one edge in $\E$. Sometimes the vertices of a covering design are referred to as \emph{points} and its edges as \emph{blocks}. A \emph{linear space} is a covering design in which each pair of points occur together in exactly one block. For a positive integer $s$, a \emph{projective plane of order $s$} is an $(s+1)$-uniform linear space with $s^2+s+1$ points and $s^2+s+1$ blocks. For an integer $s \geq 2$, an \emph{affine plane of order $s$} is an $s$-uniform linear space with $s^2$ points and $s^2+s$ blocks. It is known that, for each order $s \geq 2$, either both of these planes exist or neither do, and further that both exist if $s$ is a power of a prime (see \cite[\S2.3]{Sti}, for example).  An \emph{almost projective plane of order $s$} is an $(s+1)$-uniform covering design with $s^2+s$ points and $s^2+s$ blocks in which each point is in exactly $s+1$ blocks. Almost projective planes of orders $2$ and $3$ exist but no example of order greater than 3 has been found (see \cite{BieMarPam,BloJunSch}). A \emph{weighting} of a set $S$ is an assignment of nonnegative real values to the elements of $S$. For such a weighting $w$ we abbreviate $\sum_{s \in S'}w(s)$ to $w(S')$ for each subset $S'$ of $S$. We say a such a weighting $w$ is \emph{normalised} if $w(S)=1$.

For a covering design $D=(X,\B)$ and a normalised weighting $w$ of $X$ we define $L(D,w)$ to be $\max\{w(B):B \in \B\}$. We then define $L(D)$ to be the infimum of $L(D,w)$ over all normalised weightings $w$ of $X$. Following Hall, Kelly and Tian \cite{HalKelTia}, for each positive integer $m$, we define the \emph{ATAC data limit for $m$ machines}, denoted $L(m)$, to be the infimum of $L(D)$ over all covering designs $D$ with $m$ blocks. We will see in Lemma~\ref{L:LVsFracIndepSet} that these infimums are in fact minimums. We discuss the motivating problem and the reason for these definitions in Section~\ref{S:problem}. For any covering design $D$ we have $L(D')=L(D)$ where $D'$ is a covering design obtained from $D$ by adding a new block that is a subset of an existing block. So $L(m)$ is nonincreasing in $m$. We summarise some of the main results of \cite{HalKelTia} in the following theorem.

\begin{Theorem}[\cite{HalKelTia}]\label{T:HKTBound}
For each positive integer $m$,
\begin{equation}\label{E:HKTBound}
L(m) \geq \min \biggl\{ \mfrac{1}{\lfloor \sqrt{m} \rfloor}  , \mfrac{\lfloor \sqrt{m} \rfloor + 1}{m} \biggr\} .
\end{equation}
Furthermore, we have equality in \eqref{E:HKTBound} if $m=s^2+s+1$ and there exists a projective plane of order $s$ or if $m=s^2+s$ and there exists an affine plane of order $s$.
\end{Theorem}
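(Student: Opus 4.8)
The plan is to establish the two parts separately: inequality~\eqref{E:HKTBound} will follow from a short double-counting argument valid for every covering design with $m$ blocks, and the equality cases will be witnessed by the relevant plane equipped with the uniform weighting.

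For the lower bound, fix a covering design $D=(X,\B)$ with $|\B|=m$, fix a normalised weighting $w$ of $X$, and write $q=\lfloor\sqrt m\rfloor$ and $L=L(D,w)$. We may assume $|X|\geq 2$, since otherwise the statement is immediate; then every vertex lies in at least one block. The key observation is that, for each vertex $x$, the blocks containing $x$ cover $X$: for any $y\in X$ with $y\neq x$ the pair $\{x,y\}$ lies in some block, and that block contains $x$. Hence, writing $d_x$ for the number of blocks containing $x$ and using subadditivity of $w$, we get $d_x L\geq\sum_{B\ni x}w(B)\geq w(X)=1$, so $d_x\geq 1/L$. Now suppose $L<1/q$; then $d_x\geq 1/L>q$, so $d_x\geq q+1$ for every $x$ since $d_x$ is an integer. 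Counting incidences then gives
\[
mL \;\geq\; \sum_{B\in\B}w(B) \;=\; \sum_{x\in X}d_x\,w(x) \;\geq\; (q+1)\sum_{x\in X}w(x) \;=\; q+1,
\]
so $L\geq(q+1)/m$. In either case $L(D,w)\geq\min\{1/q,\,(q+1)/m\}$, and taking the infimum over $w$ and then over all covering designs with $m$ blocks yields \eqref{E:HKTBound}.

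For the equality cases, suppose first that $m=s^2+s+1$ and a projective plane $D$ of order $s$ exists. Regard $D$ as a covering design with point set $X$ of size $s^2+s+1$ and its $s^2+s+1=m$ lines as blocks, and take the uniform weighting $w(x)=1/(s^2+s+1)$; since each block has $s+1$ points, $L(m)\leq L(D)\leq L(D,w)=(s+1)/(s^2+s+1)$. As $s^2<m<(s+1)^2$ we have $\lfloor\sqrt m\rfloor=s$, and since $(s+1)/(s^2+s+1)\leq 1/s$ the right-hand side of \eqref{E:HKTBound} equals $(s+1)/(s^2+s+1)$; together with the lower bound this forces equality. The affine case is analogous: with $m=s^2+s$ and an affine plane of order $s$, the uniform weighting on its $s^2$ points gives each of the $s^2+s=m$ blocks weight $1/s$, and here $\lfloor\sqrt m\rfloor=s$ with $(s+1)/(s^2+s)=1/s$, so \eqref{E:HKTBound} again reduces to the value achieved.

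Since the counting argument is elementary, I do not expect a serious obstacle. The points needing care are: the degenerate small cases (ensuring every vertex has positive degree, and checking the bound is not vacuous when $q=1$); stating cleanly that $w$ is subadditive over the possibly overlapping blocks through a vertex; and, for the equality part, recalling the standard incidence parameters of projective and affine planes (each point on $s+1$ lines; lines of size $s+1$, respectively $s$) and verifying the arithmetic comparisons that collapse the minimum in \eqref{E:HKTBound}.
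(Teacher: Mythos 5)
Your proof is correct. The paper states this theorem only as a citation to \cite{HalKelTia} and does not reprove it, but your two counting steps are precisely the machinery the paper itself develops later: the observation that the blocks through a point $x$ cover all of $X$ (forcing $r_x\geq 1/L$) is the content of Lemma~\ref{L:maxWeight} via \eqref{E:blockWeightSum}, and the incidence count $mL\geq\sum_{x}r_xw(x)\geq(q+1)$ is Lemma~\ref{L:avWeightBound}; the equality verifications for projective and affine planes under the uniform weighting are routine and correct.
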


In this paper we will further the investigation initiated in \cite{HalKelTia} a number of ways. In Section~\ref{S:relat}, we observe some connections between the data limit and previously studied parameters, namely covering numbers and fractional matching numbers. In Section~\ref{S:projAff}  we investigate the cases where $m \in \{s^2+s,s^2+s+1\}$ for some positive integer $s$ and investigate when equality in \eqref{E:HKTBound} is possible. We prove the following theorem and, in addition,
also characterise the covering designs that can achieve these bounds (see Lemmas~\ref{L:projPlanesEtc} and \ref{L:affinePlanesEtc} for details).

\begin{Theorem}\label{T:projAff} \phantom{a}
\begin{itemize}
    \item[\textup{(a)}]
For each positive integer $s$, we have $L(s^2+s+1)=\frac{s+1}{s^2+s+1}$ if and only if a projective plane of order $s$ exists. Furthermore, if $D$ is a covering design with $s^2+s+1$ blocks and at most $s^2+s+1$ points such that $L(D)=\frac{s+1}{s^2+s+1}$, then $D$ is a projective plane of order $s$.
    \item[\textup{(b)}]
For each integer $s \geq 2$, we have $L(s^2+s)=\frac{1}{s}$ if and only if either an affine plane of order $s$ exists or an almost projective plane of order $s$ exists. If $D$ is a covering design with $s^2+s$ blocks and at most $s^2$ points such that $L(D)=\frac{1}{s}$, then $D$ is an affine plane of order $s$.
\end{itemize}
\end{Theorem}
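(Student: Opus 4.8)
The plan is to treat parts (a) and (b) separately, though both hinge on the same two-part structure: a lower bound on $L(D)$ coming from a counting/weighting argument, followed by an equality analysis that forces the combinatorial structure. Throughout I would rely on Lemma~\ref{L:LVsFracIndepSet} to know that $L(D) = L(D,w)$ for some weighting $w$, and use the reverse direction that to prove $L(m) \le v$ it suffices to exhibit one covering design $D$ with $m$ blocks and a weighting $w$ with $L(D,w) = v$; the design coming from a projective plane with the uniform weighting $w(x) = \frac{1}{s^2+s+1}$ gives $L(s^2+s+1) \le \frac{s+1}{s^2+s+1}$, and likewise an affine plane of order $s$ (with the uniform weighting $\frac{1}{s^2}$ on points, each block having weight $\frac{s}{s^2} = \frac{1}{s}$) or an almost projective plane of order $s$ (uniform weighting $\frac{1}{s^2+s}$ on points, each $(s+1)$-block having weight $\frac{s+1}{s^2+s} = \frac{1}{s}$) gives $L(s^2+s) \le \frac{1}{s}$. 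So the ``if'' directions of both biconditionals are immediate from producing the weighting.

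For the ``only if'' directions and the structural characterisations, I would argue as follows for (a). Suppose $D = (X,\B)$ is a covering design with $|\B| = s^2+s+1$ blocks achieving $L(D) = \frac{s+1}{s^2+s+1}$, and first handle the case $|X| \le s^2+s+1$. Take an optimal normalised weighting $w$; then $w(B) \le \frac{s+1}{s^2+s+1}$ for every block. Summing $w(B)$ over all $s^2+s+1$ blocks, and on the other hand counting how many blocks each point $x$ lies in, we get $\sum_{B} w(B) = \sum_{x} w(x)\, r(x)$ where $r(x)$ is the replication number of $x$. Since each pair of points is covered, a point of small replication number forces large weights elsewhere; the key inequality is that if $x$ lies in only $r$ blocks then those blocks must cover all of $X \setminus \{x\}$, so they contain at least $\frac{|X|-1}{r} + 1$ points each on average, and combined with the bound $w(B) \le \frac{s+1}{s^2+s+1}$ one extracts that $r(x) \ge s+1$ for all $x$ and $|X| = s^2+s+1$ (using $L(s^2+s+1) \ge \frac{s+1}{s^2+s+1}$ from Theorem~\ref{T:HKTBound}, noting $\lfloor\sqrt{s^2+s+1}\rfloor = s$). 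Then a double-counting of incident point-block pairs and of covered pairs forces every block to have exactly $s+1$ points and every point replication exactly $s+1$, and the pair-covering condition with $\binom{s+1}{2}(s^2+s+1) = \binom{s^2+s+1}{2}$ forces each pair to be covered exactly once — i.e.\ $D$ is a projective plane of order $s$. The case $|X| > s^2+s+1$ (allowed in the first sentence of (a) but not in the characterisation) is handled by deleting points: I would show one can reduce to the $\le s^2+s+1$ case without increasing $L$, so that a projective plane still must exist.

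For (b) the structure is the same but with an extra branch, which is the main obstacle. Assuming $L(D) = \frac 1s$ with $|\B| = s^2+s$ and $|X| \le s^2$, the counting argument (now with $\lfloor \sqrt{s^2+s} \rfloor = s$, so $\frac{1}{s} = \min\{\frac 1s, \frac{s+1}{s^2+s}\}$ is the HKT bound and is tight) should force $|X| = s^2$, every block of size exactly $s$, every point of replication exactly $s+1$, and every pair covered exactly once — yielding an affine plane. When the hypothesis $|X| \le s^2$ is dropped (the bare biconditional $L(s^2+s) = \frac 1s$), the same counting instead allows the slightly larger configuration with $|X| = s^2+s$ points, blocks of size $s+1$, and replication $s+1$: here $\binom{s+1}{2}(s^2+s) = (s+1)\binom{s^2+s}{2}/(s^2+s-1)$ does not force simple coverage, and one lands on an almost projective plane. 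The delicate part is showing these are the \emph{only} two possibilities — i.e.\ that no ``intermediate'' point count or non-uniform block-size distribution can also attain $\frac 1s$ — which I expect to require a careful convexity argument on the replication-number vector together with the pair-covering constraint, ruling out mixed block sizes by showing any slack in one block forces a pair to go uncovered or a weight to exceed $\frac 1s$. Once the two extremal structures are identified, the converse (existence of either plane $\Rightarrow L(s^2+s) = \frac 1s$) follows from the weightings described in the first paragraph, completing the proof.
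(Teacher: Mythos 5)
Your ``if'' directions and your treatment of part (a) are essentially sound. For (a), once you reduce to a design with at most $s^2+s+1$ points (which you should do explicitly via Lemma~\ref{L:equiDef} for the bare biconditional, and via Remark~\ref{R:removing} for the ``furthermore'' clause --- ``deleting points'' alone does not obviously get you below $s^2+s+1$ points, since the equality conditions only give $|X|\geq s^2+s+1$), the chain works: positivity forces $r_x\geq s+1$ by Lemma~\ref{L:maxWeight}, equality in Lemma~\ref{L:avWeightBound} forces $r_x=s+1$ and $w(B)=\frac{s+1}{s^2+s+1}$ for every block, then $w(x)\leq\frac{1}{s^2+s+1}$ together with $|X|\leq s^2+s+1$ forces the uniform weighting, hence uniform block size $s+1$ and exactly-once pair coverage. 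This is a genuinely more elementary route than the paper's, which instead invokes F\"{u}redi's theorem (Theorem~\ref{T:Furedi}) that a simple intersecting hypergraph with edges of size at most $c+1$ and fractional matching number exceeding $c$ must be a projective plane of order $c$. The same elementary analysis also handles the ``furthermore'' clause of (b), where the hypothesis $|X|\leq s^2$ pins everything down to an affine plane.

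The genuine gap is the ``only if'' direction of the biconditional in part (b). There the equality analysis leaves you with: $r_x=s+1$ for all $x$, $w(B)=\frac{1}{s}$ for all $B$, $w(x)\leq\frac{1}{s^2}$ for all $x$, and hence only $s^2\leq|X|\leq s^2+s$ (after reducing to $|X|\leq s^2+s$ via Lemma~\ref{L:equiDef}). None of this forces the weighting to be uniform, the block sizes to be uniform, or the point count to be exactly $s^2$ or $s^2+s$; and the pair-covering count $\sum_B\binom{|B|}{2}\geq\binom{|X|}{2}$ is satisfied with substantial slack for $|X|$ near $s^2+s$, so convexity of $\binom{\cdot}{2}$ gives no contradiction for mixed block sizes or intermediate point counts. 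The step you defer to ``a careful convexity argument'' is precisely the hard classification of intersecting hypergraphs with edges of size at most $c+1$ attaining $\nu^*=c$, which is the content of a theorem of Bierbrauer, Marcugini and Pambianco (Theorem~\ref{T:BieMarPam} in the paper, extending F\"{u}redi's result to the boundary case); the paper's proof of (b) rests entirely on citing it. Without either reproving that theorem or citing it, your proof of part (b) is incomplete.
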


In Section~\ref{S:newCBound}, we note that Theorem~\ref{T:HKTBound} establishes the asymptotics for the data limit and, further, we provide a stronger lower bound on the data limit.

\begin{Theorem}\label{T:newBound}
For each integer $m \geq 2$, we have
\begin{equation}\label{E:newBound}
L(m) \geq \mfrac{s^2 + (2m-1)s - 1  + \sqrt{(s^2-s-1)^2 +
  4m(s-1)(s^2 + s + 1 - m)}}{2 m (s^2+s-1)}
\end{equation}
where $s$ is the positive integer such that $s^2-s+1 < m \leq s^2+s+1$. Furthermore, if we have equality in \eqref{E:newBound}, then $m=4$ or $m \in \{s^2+s,s^2+s+1\}$.
\end{Theorem}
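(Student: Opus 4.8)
The plan is to reduce $L(m)$ to a single design and then run a two‑step counting argument. By Lemma~\ref{L:LVsFracIndepSet} the infimum defining $L(m)$ is attained, so fix a covering design $D=(X,\B)$ with exactly $m$ blocks and a normalised weighting $w$ with $L(D,w)=L(m)=:t$; deleting points of weight $0$, we may assume $w(v)>0$ for all $v\in X$. A direct computation shows the right‑hand side of \eqref{E:newBound} is a root of
\[
q(t):=m(s^2+s-1)\,t^2-\bigl(s^2+(2m-1)s-1\bigr)\,t+(m-1),
\]
so it suffices to establish, for this extremal pair $(D,w)$, inequalities forcing $t$ to be at least this root.

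For the core estimates, let $x$ be a point of maximum weight $a:=w(x)$, and let $d$ be the number of blocks containing $x$. Since $x$ lies in a block and every block has weight at most $t$, we have $a\le t$; since each $v\ne x$ is co-covered with $x$, we have $w(v)\le t-a$; and since the $d$ blocks through $x$ cover all of $X$, each contributing weight at most $t-a$ away from $x$, we get $1-a\le d(t-a)$. The second, and more delicate, ingredient counts the weighted pairs inside $X\setminus\{x\}$: every such pair lies in a block, a block through $x$ can hold only the pairs it contains away from $x$ and a block avoiding $x$ only the pairs it contains, and the covering condition bounds the total weighted pair-count of $X\setminus\{x\}$ from above in terms of $d$, $m$, $a$, $t$. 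To keep this estimate tight one must retain the $\sum_v w(v)^2$-type terms — equivalently, keep track of the block sizes and the number of points — and it is precisely here that the parameter $s$ (essentially the integer nearest $\sqrt m$, which governs the optimal trade-off between the number of blocks and their sizes) enters. Getting the exact form of this refined pair-count is the step I expect to be the main obstacle.

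Combining the two inequalities and eliminating the auxiliary quantities $a$, $d$ and the number of points by a routine optimisation — the binding case being when both principal inequalities hold with equality — should yield $q(t)\ge 0$ together with $t$ lying on the correct side of the roots of $q$, which is exactly \eqref{E:newBound}. For the final assertion, equality in \eqref{E:newBound} forces all of the above inequalities to be tight: $w$ must be uniform, $x$ must lie in exactly $d$ blocks that partition $X\setminus\{x\}$ into equal parts, all block sizes are prescribed, and the blocks avoiding $x$ must cover the remaining pairs with no overlap. Such a configuration is rigid enough that the parameters of $D$ are forced to match those of an affine plane, a projective plane, or an almost projective plane — giving $m\in\{s^2+s,s^2+s+1\}$ by Theorem~\ref{T:projAff} and its proof — with the single exception surviving at $m=4$, where $L(4)=\tfrac35$ is attained (for instance by the four blocks $\{1,2,3\},\{1,2,4\},\{1,2,5\},\{3,4,5\}$). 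Ruling out every other $m$, using the integrality of $d$ and of the forced block sizes, is the second part requiring care.
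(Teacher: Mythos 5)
You have correctly identified the target quadratic $q(t)=m(s^2+s-1)t^2-\bigl(s^2+(2m-1)s-1\bigr)t+(m-1)$ and the overall shape of the argument (reduce to one extremal design, derive a quadratic inequality in $t$, select the correct root), but there is a genuine gap at exactly the point you flag yourself: the ``refined pair-count'' inequality is never derived, and it is the entire content of the theorem. Your three stated inequalities ($a\le t$, $w(v)\le t-a$, $1-a\le d(t-a)$) are correct but are local to a single maximum-weight point and contain no mechanism for the parameter $s$, or for a bound on the number of points, to enter; they cannot by themselves produce $q(t)\ge 0$. The paper's proof is global rather than local: by Lemma~\ref{L:equiDef} (resting on F\"uredi's Lemma~\ref{L:symCover}) one may assume $|X|=|\B|=m$; then, writing $y_i$ for the total weight of the points of replication number $i$, Lemma~\ref{L:maxWeight} gives $w(x)\le\frac{r_x\ell-1}{r_x-1}$ for every point, which together with $|X|=m$ yields $\sum_i\frac{i-1}{i\ell-1}\,y_i\le m$, while Lemma~\ref{L:avWeightBound} yields $m\ell\ge\sum_i iy_i$; a specific linear combination of these with $\sum_i y_i=1$ gives $q(\ell)\ge 0$. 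The constraint $|X|\le m$ is essential to this derivation and your sketch only alludes to it.

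A second gap: you assert but do not argue that $t$ ``lies on the correct side of the roots of $q$''. In the paper this is supplied by the first part of Lemma~\ref{L:affinePlanesEtc}: since the remaining case has $m<s^2+s$, one gets $\ell>\frac{1}{s}$, and $q(\frac{1}{s})\le 0$, so $\ell$ is at least the larger root, which equals $F(m)$. Without this, $q(t)\ge 0$ is consistent with $t$ being at most the smaller root and the theorem does not follow. The equality analysis is likewise only a plan; the paper's route is that tightness forces equality in \eqref{E:vertWeightBound} at every point, hence (by Lemma~\ref{L:maxWeight}) every pair of points lies in exactly one block, so $D$ is a linear space with $m$ points and $m$ blocks, and the De Bruijn--Erd\H{o}s theorem leaves only a projective plane (excluded since $m\ne s^2+s+1$) or a near pencil, which forces $s=2$ and $m=4$. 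Your appeal to affine and almost projective planes at this stage is misplaced, since the cases $m\in\{s^2+s,s^2+s+1\}$ are disposed of before the equality analysis begins.
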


We will see in Lemma~\ref{L:boundFacts} that \eqref{E:newBound} agrees with \eqref{E:HKTBound} when $m \in \{s^2+s,s^2+s+1\}$ and is strictly stronger for all other values of $m$.

In Section~\ref{S:designClasses}, we investigate values of $L(D)$ subject to the requirement that $D$ is a covering design of a certain type and establish data limits for transversal designs and finite projective Hjelmslev planes. In Section~\ref{S:conc} we give some concluding thoughts and discuss some open problems.

\section{Describing the All to All Comparison Problem}\label{S:problem}

In situations where every item in a data set must be compared with every other item in the set, it may be desirable to store the data across a number of machines in such a way that any two data items are stored together on at least one machine \cite{HalKelTia,ZhaTiaFidKel,ZhaTiaKelFid}. When the data set is large and the storage capacity of the machines is limited, we may further want to do this in a way that minimises the fraction of the data that needs to be stored on any given machine. To do this, we partition the data set into a number of \emph{data groups}; each data group is represented by a point of a covering design $D$, and a normalised weighting $w$ of these points indicates the fraction of the data allocated to each group. Each machine is represented by a block of $D$ that contains exactly the data groups stored on that machine. The requirement that any two data items be stored together on some machine corresponds exactly to the requirement that the blocks of $D$ form a covering design. The maximum fraction of the data that needs to be stored on any one machine is then $L(D,w)$. Consequently that data limit $L(m)$ represents the minimum such fraction over all possible groupings of the data and all possible allocations of the groups to $m$ machines.

To illustrate our definitions we determine $L(m)$ for $m \in \{1,2,3,4\}$. In any covering design $(X,\B)$ with $|\B| \in \{1,2\}$, it can be seen that entire point set $X$ must be a block in $\B$. From this it follows that $L(m)=1$ for $m \in \{1,2\}$.

\begin{Example}\label{X:order3}
Let $D=(X,\B)$ be the covering design with point set $X=\{1,2,3\}$ and block set $\B=\{\{1,2\},\{1,3\},\{2,3\}\}$. So $D$ is a projective plane of order 1 (note that we allow projective planes of order 1, even though they are sometimes excluded as trivial). Let $w$ be the normalised weighting of $X$ given by $w(x)=\frac{1}{3}$ for each $x \in \{1,2,3\}$. Then $w(B)=\frac{2}{3}$ for each $B \in \B$ and hence $L(D,w)=\frac{2}{3}$. Furthermore, we have $L(3) \geq \frac{2}{3}$ by Theorem~\ref{T:HKTBound} and so this example shows that $L(3)=\frac{2}{3}$.
\end{Example}

\begin{Example}\label{X:order4}
Let $D=(X,\B)$ be the covering design with point set $X=\{1,2,3,4\}$ and block set $\B=\{\{1,2\},\{1,3\},\{1,4\},\{2,3,4\}\}$. Let $w$ be the normalised weighting of $X$ given by $w(1)=\frac{2}{5}$ and $w(x)=\frac{1}{5}$ for each $x \in \{2,3,4\}$. Then $w(B)=\frac{3}{5}$ for each $B \in \B$ and hence $L(D,w)=\frac{3}{5}$. Furthermore, we have $L(4) \geq \frac{3}{5}$ by Theorem~\ref{T:newBound} and so this example shows that $L(4)=\frac{3}{5}$. Since $L(m)$ is nonincreasing in $m$, this implies that $L(m) \leq \frac{3}{5}$ for each integer $m \geq 4$.
\end{Example}

\section{Relationship to other parameters}\label{S:relat}

In this section we observe a relationship between the data limit and two other previously studied combinatorial parameters.

Let $H=(X,\mathcal{E})$ be a hypergraph. Throughout the paper, for each $x \in X$, we denote $\{E \in \mathcal{E}:x \in E\}$ by $\mathcal{E}_x$ and $|\E_x|$ by $r_x$. In hypergraph terms $r_x$ is the degree of vertex $x$, while in design-theoretic terminology $r_x$ is sometimes called the \emph{replication number} of $x$. The \emph{dual} of a hypergraph  is the hypergraph whose vertex set is $\mathcal{E}$ and whose edge set is $\{\mathcal{E}_x:x \in X\}$. Thus the dual is the hypergraph obtained from $H$ by interchanging the role of vertices and edges. We say a hypergraph is \emph{simple} if no two of its edges are incident with exactly the same set of vertices (note that if a hypergraph is not simple then its dual will contain multiple vertices incident with exactly the same set of edges). We say that $H$ is \emph{intersecting} if $E_1 \cap E_2 \neq \emptyset$ for all $E_1,E_2 \in \mathcal{E}$. Observe that a hypergraph is intersecting if and only if its dual is a covering design. We say a weighting $w$ is \emph{rational} if it assigns only rational weights.

A \emph{fractional matching} in a hypergraph $H=(X,\E)$ is a weighting $w$ of $\E$ such that $\sum_{E \in \E_x}w(E) \leq 1$ for each $x \in X$. The \emph{size} of $w$ is $\sum_{E \in \E}w(E)$. The \emph{fractional matching number} $\nu^*(H)$ of $H$ is the maximum size of a fractional matching in $H$. Because $\nu^*(H)$ is the optimal value of a linear program, it is clearly well-defined for any hypergraph $H$. In \cite{Fur}, F{\"u}redi studied fractional matching numbers of intersecting hypergraphs. We first exhibit the relationship between these and data limits.\pagebreak

\begin{Lemma}\label{L:LVsFracIndepSet}\phantom{a}
\begin{itemize}
    \item[\textup{(a)}]
For any covering design $D$, we have that $L(D)=1/\nu^*(H)$ where $H$ is the dual of $D$. Furthermore, $L(D)=L(D,w_0)$ for some rational normalised weighting $w_0$ of the points of $D$.
    \item[\textup{(b)}]
For any positive integer $m$, $L(m)=1/\nu^*(m)$ where $\nu^*(m)$ is the supremum of $\nu^*(H)$ over all intersecting hypergraphs $H$ with $m$ vertices. Furthermore, $L(D)=L(D,w_0)$ for some covering design $D$ with $m$ blocks and some rational normalised weighting $w_0$ of the points of $D$.
\end{itemize}
\end{Lemma}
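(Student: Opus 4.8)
The key is LP duality. Given a covering design $D = (X,\B)$ with dual $H = (\B, \{\B_x : x \in X\})$, a normalised weighting $w$ of $X$ is essentially an assignment of weights to the *edges* of $H$, and $L(D,w) = \max_{B \in \B} w(\B^{-1}(B))$... let me restate: for a vertex $B$ of $H$, its degree-sum under $w$ is $\sum_{x : B \in \B_x} w(x) = \sum_{x \in B} w(x) = w(B)$ (viewing $B \subseteq X$). So minimising $L(D,w) = \max_{B} w(B)$ over normalised $w$ is the LP: minimise $t$ subject to $\sum_{x \in B} w(x) \le t$ for all $B \in \B$, $\sum_x w(x) = 1$, $w \ge 0$. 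Rescaling by $y = w/t$ turns this into: maximise $\sum_x y(x)$ subject to $\sum_{x \in B} y(x) \le 1$ for all $B$, $y \ge 0$ — which is exactly the fractional matching LP for $H$, whose optimum is $\nu^*(H)$. Hence $1/L(D) = \nu^*(H)$. So the plan is: (i) set up this LP carefully; (ii) justify the rescaling bijection between feasible points (handling the degenerate case $t = 0$, which cannot occur since every block is nonempty and $w(X)=1$ forces some $w(B) > 0$); (iii) conclude $L(D) = 1/\nu^*(H)$, and in particular the infimum defining $L(D)$ is attained because the LP optimum is attained (the feasible region of the fractional matching LP is a nonempty bounded polytope).

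For the "furthermore" in part (a), rationality follows because the LP has rational data: a basic optimal solution of a rational LP is rational, so some optimal $y_0$ is rational, and then $w_0 = y_0 / \nu^*(H)$ (or rather $w_0 = y_0/\sum y_0(x)$) is a rational normalised weighting achieving $L(D,w_0) = L(D)$. I would cite standard LP theory for the attainment and rationality of a basic optimal solution.

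For part (b), first observe that $L(m) = \inf_D L(D) = \inf_D 1/\nu^*(H_D)$ over covering designs $D$ with $m$ blocks, which equals $1/\sup_H \nu^*(H)$ where $H$ ranges over duals of such $D$ — and duals of covering designs with $m$ blocks are exactly intersecting hypergraphs with $m$ vertices (as noted in the text, $H$ is intersecting iff its dual is a covering design, and the dual of the dual recovers the hypergraph up to the simple/multiplicity subtlety, which does not affect $\nu^*$). So $L(m) = 1/\nu^*(m)$. The remaining claim is that this supremum/infimum is attained by *some* $D$. Here I would argue that we may restrict attention to intersecting hypergraphs $H$ on $m$ vertices that are \emph{simple} — deleting duplicate edges does not decrease $\nu^*$ — and there are only finitely many simple hypergraphs on $m$ vertices. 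Hence $\nu^*(m)$ is a maximum, attained by some simple intersecting $H$; its dual $D$ is a covering design with $m$ blocks, and combining with part (a) gives a rational normalised $w_0$ with $L(D,w_0) = L(D) = L(m)$.

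The main obstacle is the finiteness/attainment argument in part (b): one must be careful that passing to the dual and back, together with the edge-multiplicity issue, does not change the relevant quantities, and that restricting to simple hypergraphs is legitimate (i.e. that $\nu^*$ is unaffected by removing repeated edges, and that a covering design with $m$ blocks, some possibly repeated, has the same $L$-value as the corresponding simple one). Everything else is a routine LP-duality computation, and I would not belabour it.
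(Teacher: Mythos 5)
Your proposal is correct and follows essentially the same route as the paper: the rescaling $y = w/t$ between normalised weightings and fractional matchings of the dual (which the paper writes out as two explicit inequalities rather than an LP bijection), rationality via a basic optimal solution of a rational LP, and attainment in part (b) via finiteness of simple intersecting hypergraphs on $m$ vertices. No gaps.
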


\begin{proof}
We first prove (a). Say $D=(X,\B)$ and let $H=(\B,\{\B_x : x \in X\})$ be the dual of $D$. Note that $H$ is intersecting. Because $\nu^*(H)$ is the optimal value of a linear program with integer coefficients, there is a fractional matching of size $\nu^*(H)$ in $H$ in which each edge is assigned a rational weight. This induces a rational weighting $w^*$ of the point set of $D$ such that $w^*(B) \leq 1$ for each $B \in \B$. Then, for the rational normalised weighting $w_0$ of $X$ given by $w_0(x)=w^*(x)/\nu^*(H)$ we have $L(D,w_0) \leq 1/\nu^*(H)$. Thus $L(D) \leq 1/\nu^*(H)$.

Now, consider a normalised weighting $w$ of $X$ such that $w(B) \leq L(D)$ for each $B \in \B$. There is a fractional matching $w^*$ of $H$ of size $1/L(D)$ defined by $w^*(\B_x)=w(x)/L(D)$. So $\nu^*(H)\geq 1/L(D)$ or, equivalently, $L(D) \geq 1/\nu^*(H)$. Thus, in view of the preceding paragraph, we have that $L(D)=1/\nu^*(H)$ for any covering design $D$, and that $L(D)=L(D,w_0)$ for some rational normalised weighting $w$ of the points of $D$. So we have proved (a).

The first part of (b) follows directly from (a) and the definition of $L(m)$. There must be a hypergraph $H_0$ with $\nu^*(H_0)=\nu^*(m)$ because there are only finitely many nonisomorphic simple hypergraphs $H$ with $m$ vertices and it suffices to consider only these in the definition of $\nu^*(m)$. Applying (a) to the dual of $H_0$ completes the proof of (b).
\end{proof}

Note that it follows from Lemma~\ref{L:LVsFracIndepSet} that the infimums in the definitions of $L(D)$ and $L(m)$ are in fact minimums. Using the fact that the fractional matching number of a hypergraph is the optimal value of a linear program, F{\"u}redi \cite{Fur} observed the following.

\begin{Lemma}[\cite{Fur}]\label{L:symCover}
Let $H=(X,\mathcal{E})$ be an intersecting hypergraph. Then, there is a subset $\E'$ of $\E$ such that $|\E'| \leq |X|$ and $\nu^*(H')=\nu^*(H)$, where $H'$ is the intersecting hypergraph $(X,\E')$.
\end{Lemma}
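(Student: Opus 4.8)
The plan is to use the fact that $\nu^*(H)$ is the optimum value of a linear program whose only constraints, besides nonnegativity, are indexed by $X$, and to take $\E'$ to be the support of a vertex-optimal solution of this program.

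First I would write the linear program explicitly: maximise $\sum_{E \in \E} w(E)$ over weightings $w$ of $\E$ subject to $\sum_{E \in \E_x} w(E) \leq 1$ for each $x \in X$, together with $w(E) \geq 0$ for each $E \in \E$. If $\E = \emptyset$ we may simply take $\E' = \emptyset$, so assume $\E \neq \emptyset$; since $H$ is intersecting it contains no empty edge, so each edge meets $X$ and hence $w(E) \leq 1$ holds for every feasible $w$. Thus the feasible region is a nonempty bounded polytope, and the maximum $\nu^*(H)$ is attained at some vertex $w^*$ of it.

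Next I would invoke the standard fact that if a linear program in finitely many nonnegative variables has a nonempty bounded feasible region then its optimum is attained at a basic feasible solution, and that such a solution has at most $t$ nonzero coordinates, where $t$ is the number of constraints other than the nonnegativity constraints (here $t = |X|$, via introducing one slack variable per covering constraint). In particular, writing $\E'$ for $\{ E \in \E : w^*(E) > 0 \}$ regarded as a sub-multiset of $\E$, we get $|\E'| \leq |X|$.

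Finally I would check that this $\E'$ works. The restriction of $w^*$ to $\E'$ is a fractional matching of $H' = (X,\E')$ of size $\sum_{E \in \E'} w^*(E) = \nu^*(H)$, so $\nu^*(H') \geq \nu^*(H)$; conversely any fractional matching of $H'$ extends by zeros to a fractional matching of $H$ of the same size, giving $\nu^*(H') \leq \nu^*(H)$, and hence $\nu^*(H') = \nu^*(H)$. Also, $H'$ is intersecting because its edge multiset is contained in that of the intersecting hypergraph $H$. There is no substantial obstacle here; the only things requiring care are the degenerate case $\E = \emptyset$, the multiset bookkeeping, and citing the appropriate form of the ``basic feasible solutions have small support'' fact. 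If one prefers to avoid linear programming machinery, an alternative is to take a maximum fractional matching $w^*$ whose support is as small as possible, note that if its support had more than $|X|$ edges then the characteristic vectors of those edges would be linearly dependent in $\mathbb{R}^X$, and perturb $w^*$ along such a dependency to either increase its size or shrink its support --- contradicting the choice of $w^*$ in either case.
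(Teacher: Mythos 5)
Your proposal is correct and follows essentially the same route the paper relies on: the paper states this lemma as an observation of F\"uredi that follows from the fact that $\nu^*(H)$ is the optimal value of a linear program with $|X|$ constraints besides nonnegativity, and your argument (taking $\E'$ to be the support of a basic optimal solution, with the boundedness, multiset, and intersecting-subhypergraph checks) is precisely that observation carried out in detail. No gaps.
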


This allows us to give an equivalent definition of the ATAC data limit which is sometimes easier to work with.

\begin{Lemma}\label{L:equiDef}
For each positive integer $m$, $L(m)$ is the minimum value of $L(D)$ over all covering designs $D=(X,\B)$ with $|X|=|\B|=m$.
\end{Lemma}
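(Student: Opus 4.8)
The plan is to show that the minimum of $L(D)$ over covering designs $D$ with exactly $m$ points and $m$ blocks equals $L(m)$, the minimum over all covering designs with $m$ blocks (and arbitrarily many points). One inequality is immediate: every covering design with $m$ points and $m$ blocks has $m$ blocks, so the minimum over this smaller class is at least $L(m)$. For the reverse inequality, I would start from a covering design $D=(X,\B)$ with $|\B|=m$ that achieves $L(D)=L(m)$ (such $D$ exists by Lemma~\ref{L:LVsFracIndepSet}(b)), and produce from it a covering design $D''$ with $|X''|=|\B''|=m$ and $L(D'') \leq L(D)$.

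The key step is to first control the number of points. Let $H$ be the dual of $D$; it is an intersecting hypergraph with $m$ vertices, and by Lemma~\ref{L:LVsFracIndepSet}(a) we have $L(D)=1/\nu^*(H)$. Apply Lemma~\ref{L:symCover} to $H$: there is a sub-hypergraph $H'=(\B,\E')$ with $|\E'|\leq|\B|=m$ and $\nu^*(H')=\nu^*(H)$. Now take $D'$ to be the dual of $H'$; this is a covering design (since $H'$ is intersecting, being a sub-hypergraph of the intersecting hypergraph $H$ on the same vertex set) with $m$ blocks and at most $m$ points, and $L(D')=1/\nu^*(H')=1/\nu^*(H)=L(D)=L(m)$. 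So we have reduced to the case $|X'|\leq|\B'|=m$.

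It remains to pad $D'$ up to exactly $m$ points without increasing its data limit. If $D'=(X',\B')$ has $|X'|=m'<m$, I would add $m-m'$ new points, each of weight $0$ under an optimal weighting, and place each new point inside some existing block (or, to keep all blocks distinct and all points covered, distribute the new points among the blocks so that every new point lies in at least two blocks, possibly enlarging blocks). Adding a point of weight $0$ to existing blocks does not change $w(B)$ for any block, and pairs involving a new point can be covered by ensuring each new point lies in at least one block together with every other point — concretely, one can add all new points to a single existing block $B_0$ and also to one other block, or simply observe that since $|\B'| = m \geq 2$ we may add every new point to two of the existing blocks, which keeps the design a covering design. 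The resulting $D''$ has exactly $m$ points and $m$ blocks and satisfies $L(D'')\leq L(D',w')=L(D')=L(m)$, using the optimal weighting $w'$ for $D'$ extended by zero. Combined with the trivial inequality, this gives equality.

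The main obstacle is the bookkeeping in the last step: one must add the new zero-weight points in a way that genuinely preserves the covering property (every pair of points in a common block) while not creating a block heavier than $L(m)$ and without needing more than $m$ blocks. This is routine — assigning weight $0$ to the new points makes the weight condition automatic, and since there are at least two blocks available, each new point can be thrown into two existing blocks to cover all pairs it is involved in — but it is the only place where a small explicit construction is required rather than a direct appeal to the earlier lemmas.
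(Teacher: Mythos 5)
Your overall route---pass to the dual, invoke Lemma~\ref{L:symCover} to cut the number of points down to at most $m$ while preserving $L$, then pad back up to exactly $m$ points using zero-weight points---is essentially the paper's proof; the paper just phrases the padding step dually, as extending an intersecting hypergraph with at most $m$ edges to one with exactly $m$ edges whose new edges get weight $0$ in a fractional matching. The one place your write-up goes wrong is the concrete padding recipe. Placing a new point $y$ into two of the existing blocks does \emph{not} in general keep the design a covering design: the pair $\{y,x\}$ is covered only for those old points $x$ lying in one of the two chosen blocks, and nothing forces every old point to lie in their union. For instance, if $D'$ has points $\{1,2,3\}$ and blocks $\{1,2\},\{1,3\},\{2,3\},\{1\}$ and you insert a new point into $\{1,2\}$ and $\{1\}$, the pair consisting of the new point and $3$ is uncovered. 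The same objection applies to the variant ``all new points into $B_0$ and one other block.''

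The fix is exactly the condition you state but do not correctly instantiate: each new point must share a block with \emph{every} other point. The cheapest way to guarantee this is to add each new point as a duplicate of some existing point $x_0$, i.e., to insert it into every block of $\B'$ containing $x_0$. Since $x_0$ already meets every other point in some block, so does its duplicate, and any two such duplicates also share a block. Giving the new points weight $0$ leaves every block weight unchanged, so $L(D'')\leq L(D',w')=L(m)$ and the argument closes. This duplication is precisely the primal translation of the paper's ``edges may be given weight $0$'' step, so once repaired your proof coincides with the paper's.
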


\begin{proof}
Lemma~\ref{L:symCover} implies that the quantity $\nu^*(m)$ defined in Lemma~\ref{L:LVsFracIndepSet}(b) can be equivalently defined as the maximum value of $\nu^*(H)$ over all intersecting hypergraphs $H$ with $m$ vertices and at most $m$ edges. Since edges may be given weight 0 in a fractional matching this is clearly also the maximum value of $\nu^*(H)$ over all intersecting hypergraphs $H$ with $m$ vertices and exactly $m$ edges. The result now follows from Lemma~\ref{L:LVsFracIndepSet}(b).
\end{proof}

We now discuss a relationship with another well-studied design theoretic parameter.
For positive integers $k$ and $v$ with $k \leq v$, the \emph{covering number $C(v,k,2)$} is the minimum number of blocks in a $k$-uniform covering design with $v$ points. In \cite{Mil}, Mills considers the quantity
\[\max\left\{\mfrac{v}{k}:k,v \in \mathbb{Z}, 1 \leq k \leq v \text{ and } C(v,k,2) \leq m\right\}\]
for given values of $m$, showing that this maximum does indeed exist and computing it exactly for positive integers $m \leq 13$. Our next lemma shows that $L(m)$ is in fact the reciprocal of this quantity.

\begin{Lemma}\label{L:LVsCovNumber}
For each positive integer $m$, it holds that
\begin{equation}\label{E:LVsCovNumber}
L(m) = \min\left\{\mfrac{k}{v}:k,v \in \mathbb{Z}, 1 \leq k \leq v \text{ and } C(v,k,2) \leq m\right\}.
\end{equation}
\end{Lemma}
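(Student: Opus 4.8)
The plan is to prove two inequalities relating $L(m)$ to the quantity $\min\{k/v : 1 \le k \le v,\ C(v,k,2) \le m\}$, using the characterisation of $L(m)$ furnished by Lemma~\ref{L:LVsFracIndepSet}(b) (or the equivalent form in Lemma~\ref{L:equiDef}) together with Lemma~\ref{L:LVsFracIndepSet}(a).

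First I would show $L(m) \le k/v$ whenever $C(v,k,2) \le m$. Given such $k,v$, take a $k$-uniform covering design $D$ on $v$ points with at most $m$ blocks (which exists by definition of $C(v,k,2)$), and if necessary pad it up to exactly $m$ blocks by repeating subsets of existing blocks, which does not change $L(D)$. Put the uniform normalised weighting $w(x) = 1/v$ on the $v$ points; then $w(B) = k/v$ for every block $B$ (blocks added in padding have weight at most $k/v$), so $L(D) \le L(D,w) = k/v$, and hence $L(m) \le L(D) \le k/v$. Taking the minimum over all admissible $(k,v)$ gives $L(m) \le \min\{k/v : \ldots\}$.

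For the reverse inequality, I would invoke Lemma~\ref{L:LVsFracIndepSet}(b): there is a covering design $D = (X,\B)$ with $|\B| = m$ and a \emph{rational} normalised weighting $w_0$ of $X$ with $L(D,w_0) = L(m)$. Write each $w_0(x)$ as a fraction with common denominator $N$, say $w_0(x) = a_x/N$ with $a_x$ a nonnegative integer and $\sum_x a_x = N$. Now build a new covering design $D'$ by ``blowing up'' each point $x$ into $a_x$ copies: replace $x$ by a set $X_x$ of $a_x$ new points, and replace each block $B \in \B$ by $\bigcup_{x \in B} X_x$. The result $D'$ is still a covering design (any two points of $D'$ lie in a common block because the corresponding points of $D$ did), it has $v := N$ points, and every block $B' \in \B'$ has size $\sum_{x \in B} a_x = N \cdot w_0(B) \le N \cdot L(m)$. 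To get a uniform design I would then enlarge blocks as needed so that every block has exactly $k := \lceil N L(m) \rceil$ points; this keeps it a covering design and gives a $k$-uniform covering design on $v = N$ points with $m$ blocks, so $C(v,k,2) \le m$. Since $L(m) = L(D,w_0)$ is attained and blocks achieving the max have weight exactly $L(m)$, we can arrange $k = NL(m)$ exactly (i.e.\ $NL(m) \in \zed$, by choosing $N$ so that $L(m)$ is a multiple of $1/N$, which is possible as $L(m)$ is rational by part (a)). Then $k/v = NL(m)/N = L(m)$, and this $(k,v)$ is admissible, so $\min\{k/v : \ldots\} \le L(m)$.

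I expect the main obstacle to be the bookkeeping in the blow-up construction — specifically, ensuring the resulting design is genuinely $k$-uniform with $k/v = L(m)$ rather than merely $k/v \le$ something slightly larger. The clean way around this is to choose the common denominator $N$ to be a common multiple of the denominators of all the $w_0(x)$ \emph{and} of the denominator of $L(m)$ itself (both rational by Lemma~\ref{L:LVsFracIndepSet}(a)), so that $NL(m)$ is an integer; then every blown-up block has size at most $NL(m)$ and the padding to size exactly $NL(m)$ loses nothing. A secondary point to handle carefully is that adding points to a block or repeating blocks never destroys the covering property and never decreases block sizes in a harmful way — this is the same ``adding a block that is a subset of an existing block'' observation already noted in the introduction, applied in both directions (to blocks and, dually, to the uniformity adjustment).
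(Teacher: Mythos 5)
Your proposal is correct and follows essentially the same route as the paper's proof: the upper bound via the uniform weighting $w(x)=\frac{1}{v}$ on a $k$-uniform covering design, and the lower bound via the rational optimal weighting from Lemma~\ref{L:LVsFracIndepSet}(b), blowing up each point into $N w_0(x)$ copies and padding blocks to uniform size $k$. The only (harmless) difference is your extra care in choosing $N$ so that $NL(m)\in\mathbb{Z}$; this is automatic, since $L(m)=\max_B w_0(B)$ is itself a sum of the $w_0(x)$ and hence already a multiple of $\frac{1}{N}$, which is why the paper simply takes $k$ to be the size of a largest blown-up block.
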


\begin{proof}
Let $k$ and $v$ be integers such that $1 \leq k \leq v$ and $C(v,k,2) \leq m$. Suppose that $D=(X,\B)$ is a $k$-uniform covering design with $|\B|=m$ and that $w$ is the normalised weighting of $X$ such that $w(x)=\frac{1}{v}$ for each $x \in X$. Then
\[L(m) \leq L(D) \leq L(D,w) = \mfrac{k}{v}.\]
So $L(D) \leq \min\{\frac{k}{v}:k,v \in \mathbb{Z}, 1 \leq k \leq v \text{ and } C(v,k,2) \leq m\}$ by definition.

We now prove the opposite inequality. We do not retain any notation from the first part of the proof. By Lemma~\ref{L:LVsFracIndepSet}(b), there is a covering design $D=(X,\B)$ with $|\B|=m$ and a rational normalised weighting $w$  of $X$ such that $L(D,w) = L(m)$. Let $v$ be the least positive integer such that $v \times w(x)$ is an integer for each $x \in X$. Let $X'$ and $\B'$ be obtained from $X$ and $\B$ by replacing, in $X$ and in each $B \in \B$, each point $x$ with $v \times w(x)$ new points. Let $w'$ be the normalised weighting of $X'$ such that $w'(x)=\frac{1}{v}$ for each $x \in X'$. It is not difficult to see that $D'=(X',\B')$ is a covering design with $|X'|=v$ and $|\B'|=m$ and that $L(D',w')=L(D,w)$. Let $k$ be the size of a largest block in $\B'$ and let $D''=(X,\B'')$ be a $k$-uniform covering design where $\B''$ is obtained from $\B'$ by replacing each $B \in \B$ with a superset of $B$ of size $k$. Note that $C(v,k,2) \leq m$ by the existence of $D''$. Then
\[L(m)=L(D,w)=L(D',w')=L(D'',w')=\mfrac{k}{v}.\]
Thus $L(m) \geq \min\{\frac{k}{v}:k,v \in \mathbb{Z}, 1 \leq k \leq v \text{ and } C(v,k,2) \leq m\}$. This completes the proof.
\end{proof}

Via Lemma~\ref{L:LVsCovNumber}, the aforementioned computations of Mills in \cite{Mil} immediately give us the values of $L(m)$ listed in the following table. These values have also been computed independently by Kelly \cite{Kel} in the course of the research that led to \cite{HalKelTia}.

\begin{center}
\begin{tabular}{c|ccccccccccccc}
     $m$&  1 & 2 & 3 & 4 & 5 & 6 & 7 & 8 & 9 & 10 & 11 & 12 & 13\\\hline
     $L(m)$\rule{0cm}{4mm}& 1 & 1 & $\frac{2}{3}$ & $\frac{3}{5}$ & $\frac{5}{9}$ & $\frac{1}{2}$ & $\frac{3}{7}$ & $\frac{5}{12}$ & $\frac{2}{5}$ & $\frac{3}{8}$ & $\frac{5}{14}$ & $\frac{1}{3}$ & $\frac{4}{13}$
\end{tabular}
\end{center}

\section{The cases \texorpdfstring{$\bm{m=s^2+s}$}{m=s2+s} and \texorpdfstring{$\bm{m=s^2+s+1}$}{m=s2+s+1}} \label{S:projAff}

In \cite{HalKelTia} it was shown, for a positive integer $s$, that $L(s^2+s+1)=\frac{s+1}{s^2+s-1}$ if a projective plane of order $s$ exists. Also, $L(s^2+s)=\frac{1}{s}$ if an affine plane of order $s$ exists. In this section we show that the converse of the first of these statements holds and establish a partial converse for the second. Furthermore, we give a characterisation of the covering designs that can achieve these bounds.

The following two simple lemmas will be of use to us throughout this section and the next. We say that a weighting is \emph{positive} if it never assigns a weight of 0.

\begin{Lemma}\label{L:avWeightBound}
If $D=(X,\B)$ is a covering design and $w$ is a normalised weighting of $X$, then
\[|\B|\,L(D,w) \geq \sum_{x \in X}r_xw(x).\]
\end{Lemma}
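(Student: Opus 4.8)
The plan is to count, with multiplicity, the total weight carried by all the blocks and compare it to the maximum block weight. Concretely, I would start from the sum $\sum_{B \in \B} w(B)$. Since $L(D,w) = \max\{w(B) : B \in \B\}$ by definition, every term in this sum is at most $L(D,w)$, and there are $|\B|$ terms, so immediately $\sum_{B \in \B} w(B) \leq |\B|\,L(D,w)$.

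The one remaining step is to re-express the left-hand side $\sum_{B \in \B} w(B)$ by switching the order of summation. Writing $w(B) = \sum_{x \in B} w(x)$ and interchanging, we get
\[
\sum_{B \in \B} w(B) = \sum_{B \in \B}\ \sum_{x \in B} w(x) = \sum_{x \in X}\ \sum_{B \in \B_x} w(x) = \sum_{x \in X} r_x\, w(x),
\]
using the notation $\B_x = \{B \in \B : x \in B\}$ and $r_x = |\B_x|$ introduced earlier. Combining the two displays gives $|\B|\,L(D,w) \geq \sum_{x \in X} r_x w(x)$, as claimed.

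There is essentially no obstacle here: the result is a one-line double-counting argument, and the only thing to be mildly careful about is that $\B$ may be a multiset, but since we are summing over all blocks with multiplicity throughout, this causes no difficulty.
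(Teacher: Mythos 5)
Your proof is correct and follows exactly the same double-counting argument as the paper: the paper likewise observes that $\sum_{B \in \B}w(B) = \sum_{x \in X}r_xw(x)$ and that each of the $|\B|$ blocks has weight at most $L(D,w)$. Your version just spells out the interchange of summation, and your remark about multisets is handled correctly.
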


\begin{proof}
This follows immediately from the observations that $\sum_{B \in \B}w(B) = \sum_{x \in X}r_xw(x)$ and that each of the $m$ blocks in $\B$ has weight at most $L(D,w)$.
\end{proof}

\begin{Lemma}\label{L:maxWeight}
Let $D=(X,\B)$ be a covering design with $|\B|=m$, and let $w$ be a normalised weighting of $X$ such that $L(D,w)<1$. For each $x \in X$ we have $r_x \geq 2$ and
\begin{equation}\label{E:vertWeightBound}
w(x) \leq \mfrac{r_x\,L(D,w)-1}{r_x-1}.
\end{equation}
Furthermore, if $w$ is positive, then we have equality in \eqref{E:vertWeightBound} only if $w(B)=L(D,w)$ for each $B \in \B_x$ and each $y \in X \setminus \{x\}$ is in exactly one block in $\B_x$.
\end{Lemma}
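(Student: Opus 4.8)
The plan is to fix a point $x \in X$ and count the pairs $\{x,y\}$ with $y \in X \setminus \{x\}$ in two ways. Since $D$ is a covering design, each such $y$ lies in at least one block of $\B_x$, so every point of $X \setminus \{x\}$ is covered by $\bigcup_{B \in \B_x}(B \setminus \{x\})$. First I would observe that if $r_x \leq 1$, then $\B_x$ is empty or a single block $B$, and since $D$ is a covering design we would need $B = X$, forcing $w(B) = 1$ and contradicting $L(D,w) < 1$; hence $r_x \geq 2$.

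For the inequality \eqref{E:vertWeightBound}, the key step is to sum the block weights over $\B_x$. On the one hand, $\sum_{B \in \B_x}w(B) \leq r_x\,L(D,w)$ since each of the $r_x$ blocks through $x$ has weight at most $L(D,w)$. On the other hand, each $B \in \B_x$ contains $x$, so $w(B) = w(x) + w(B \setminus \{x\})$, giving $\sum_{B \in \B_x}w(B) = r_x\,w(x) + \sum_{B \in \B_x}w(B \setminus \{x\})$. Now every point $y \in X \setminus \{x\}$ lies in at least one $B \in \B_x$, so $\sum_{B \in \B_x}w(B \setminus \{x\}) \geq w(X \setminus \{x\}) = 1 - w(x)$. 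Combining the two bounds yields $r_x\,w(x) + 1 - w(x) \leq r_x\,L(D,w)$, which rearranges (using $r_x \geq 2$, so $r_x - 1 > 0$) to exactly \eqref{E:vertWeightBound}.

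For the equality characterisation, I would trace back through the two inequalities used. Equality in $\sum_{B \in \B_x}w(B) \leq r_x\,L(D,w)$ holds precisely when $w(B) = L(D,w)$ for every $B \in \B_x$. Equality in $\sum_{B \in \B_x}w(B \setminus \{x\}) \geq 1 - w(x)$ holds precisely when the multiset union of the sets $B \setminus \{x\}$ over $B \in \B_x$ covers each $y \in X \setminus \{x\}$ with total weight exactly $w(y)$; since $w$ is positive, this forces each such $y$ to lie in exactly one block of $\B_x$. (Here positivity is what lets us pass from "total weight of the over-count is zero" to "the over-count is empty".) Both conditions are necessary for equality in \eqref{E:vertWeightBound}, which is the claimed statement.

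I do not anticipate a serious obstacle here; the only point needing a little care is making the double-counting of $\sum_{B \in \B_x}w(B \setminus \{x\})$ versus $w(X \setminus \{x\})$ precise when blocks overlap, and ensuring the positivity hypothesis is invoked exactly where the over-count argument needs it.
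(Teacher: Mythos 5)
Your proposal is correct and follows essentially the same argument as the paper: both bound $\sum_{B \in \B_x} w(B)$ above by $r_x\,L(D,w)$ and below by $1+(r_x-1)w(x)$ using the covering property, then rearrange, and both trace equality back through these two inequalities (invoking positivity of $w$ exactly where you do, to rule out a point of $X\setminus\{x\}$ lying in more than one block of $\B_x$). No issues.
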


\begin{proof}
Let $x \in X$. We have $r_x \geq 2$ because otherwise, since $D$ is a covering, the unique block containing $x$ would be $X$ and hence $L(D,w)$ would be 1. Now the points in $X \setminus \{x\}$ have total weight $1-w(x)$ and each is in at least one block containing $x$. Hence,
\begin{equation}\label{E:blockWeightSum}
\sum_{B \in \B_x}w(B) \geq 1-w(x)+r_xw(x) = 1+(r_x-1)w(x).
\end{equation}
So, because $w(B) \leq L(D,w)$ for each $B \in \B_x$, we have
\begin{equation}\label{E:blockWeightBound}
L(D,w) \geq \mfrac{1}{r_x}\sum_{B \in \B_x}w(B) \geq \mfrac{1+(r_x-1)w(x)}{r_x}.
\end{equation}
Rearranging this gives \eqref{E:vertWeightBound}.

Furthermore, to have equality in \eqref{E:vertWeightBound}, we must have equality throughout \eqref{E:blockWeightBound} and hence in \eqref{E:blockWeightSum}. Equality in the first inequality in \eqref{E:blockWeightBound} implies that $w(B)=L(D,w)$ for each $B \in \B_x$. If $w$ is positive, equality in \eqref{E:blockWeightSum} implies that each $y \in X \setminus \{x\}$ is in exactly one block in $\B_x$.
\end{proof}

For a covering design $D=(X,\B)$ and a point $x$ in $X$, we say the covering design $D'=(X',\B')$ is \emph{obtained from $D$ by removing $x$} if $X'=X \setminus \{x\}$ and $\B'=\{B \setminus \{x\}: B \in \B\}$. If a covering design $D''$ is obtained from $D$ by iteratively applying this operation some number of times (including zero) we say it is \emph{obtained from $D$ by removing points}. A point $x$ in a covering design $D$ is said to be \emph{duplicated} if there is another point in $X$ that incident with exactly the same set of blocks. Recalling the definition of a simple hypergraph, a covering design has no duplicated points if and only if its dual is simple.

\begin{Remark}\label{R:removing}
For any covering design $D$ and normalised weighting $w$ of the points of $D$, we can obtain a design $D'$ with no duplicated points and a normalised positive weighting $w'$ of $D'$ such that $L(D',w')=L(D,w)$. This can be accomplished by first removing any points of $D$ with weight 0 and then iteratively removing duplicated points until no more remain, at each stage transferring any weight on the removed point to another point incident with the same set of blocks.
\end{Remark}

We will use the following special case of a result of F{\"u}redi \cite[Corollary~1]{Fur}.

\begin{Theorem}[{\cite[Corollary~1]{Fur}}]\label{T:Furedi}
Let $H=(X,\E)$ be a simple intersecting hypergraph and define $c=\max\{|E|-1:E \in \E\}$. Then $\nu^*(H) \leq c$ unless $H$ is a projective plane of order $c$.
\end{Theorem}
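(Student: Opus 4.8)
The plan is to work on the dual (covering) side of the linear program. Writing $\tau^*(H)$ for the \emph{fractional cover number} of $H$ --- the minimum of $\sum_{x \in X} y(x)$ over weightings $y$ of $X$ with $\sum_{x \in E} y(x) \geq 1$ for every $E \in \E$ --- weak linear programming duality gives $\nu^*(H) \leq \sum_{x \in X} y(x)$ for \emph{any} fractional cover $y$. So to prove $\nu^*(H) \leq c$ it suffices to exhibit a single fractional cover of total weight at most $c$. The whole strategy is therefore to construct such a cover whenever $H$ is not a projective plane, and to show that the one situation in which no cheap cover exists forces $H$ to be a projective plane of order $c$. The degenerate case $c=0$ is immediate (a simple intersecting hypergraph all of whose edges are singletons is a single singleton, i.e.\ a projective plane of order $0$), so I would assume $c \geq 1$ throughout.

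For the cover construction I would fix an edge $A$ with $|A| = c+1$. Since $H$ is intersecting, every edge meets $A$, so for an optimal fractional matching $w$ one has the algebraic identity
\[
\nu^*(H) = \sum_{E \in \E} w(E) = \sum_{x \in A} s_x - \sum_{E \in \E} w(E)\bigl(|E \cap A| - 1\bigr),
\qquad s_x := \sum_{E \in \E_x} w(E),
\]
which, using $s_x \leq 1$ and $|E \cap A| \geq 1$, recovers the easy bound $\nu^*(H) \leq |A| = c+1$. The value of writing it this way is that the two correction terms measure exactly how far $H$ is from being extremal. The clean case is when \emph{every} edge meets $A$ in at least two points: then assigning weight $\tfrac{1}{2}$ to each vertex of $A$ and $0$ elsewhere is a fractional cover of weight $(c+1)/2 \leq c$, and we are done by weak duality. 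Running this construction over every edge $B$ (each gives a cover of weight $|B|/2 \leq (c+1)/2 \leq c$ when all intersections with $B$ have size at least two), I may therefore assume that each edge is met in exactly one point by some other edge --- that is, pencils are present, which is precisely the behaviour of a projective plane.

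The remaining, and genuinely hard, case is the structural endgame. Here I would invoke strong duality to fix rational optimal solutions $w$ (a matching) and $y$ (a cover) with common value $\nu^*(H) = \tau^*(H)$, assume for contradiction that $\nu^*(H) > c$, so that $c < \nu^*(H) \leq c+1$, and then combine complementary slackness --- every edge with $w(E) > 0$ satisfies $\sum_{x \in E} y(x) = 1$, and every vertex with $y(x) > 0$ is saturated, $s_x = 1$ --- with the fact that the two correction terms in the displayed identity are now forced to be strictly less than $1$. The aim is to promote these approximate constraints to exact ones: that every edge in the support of $w$ has size exactly $c+1$, that any two such edges meet in exactly one point, and that every point incident with them has degree exactly $c+1$. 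A double-counting (Fisher-type) argument, or an appeal to the de~Bruijn--Erd\H{o}s theorem, should then identify this support as a projective plane of order $c$, and simplicity of $H$ together with the definition of $c$ would finally force $H$ itself to coincide with it.

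The main obstacle is exactly this discreteness jump. The easy averaging only yields $\nu^*(H) \leq c+1$, whereas a projective plane already has $\nu^*(H) = c + \tfrac{1}{c+1}$; what must be ruled out is every intermediate value $c < \nu^*(H) < c + \tfrac{1}{c+1}$ and, simultaneously, every near-regular but not regular configuration. This is where the hypotheses earn their keep: the rationality of the optimal solutions supplies the integrality needed to turn strict inequalities into equalities, and the \emph{simple} hypothesis is what prevents degenerate near-plane configurations (such as duplicated blocks) from slipping past the counting argument. I expect the bulk of the work, and the only delicate part, to lie in this final step.
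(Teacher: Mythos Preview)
The paper does not prove this statement. Theorem~\ref{T:Furedi} is quoted from F{\"u}redi's paper \cite[Corollary~1]{Fur} and used as a black box; the only argument the present paper supplies is the one-line deduction of Lemma~\ref{L:FurediConsequence} from it via Lemma~\ref{L:LVsFracIndepSet}. There is therefore no ``paper's own proof'' to compare your proposal against.

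As for the proposal itself, the overall shape --- passing to the dual fractional cover problem, producing a cheap cover of weight $(c+1)/2$ whenever some edge is met twice by every other edge, and otherwise using complementary slackness to rigidify the support of an optimal matching into a projective plane --- is the standard route and is consistent with how F{\"u}redi argues. But you have not actually carried out the hard step, and you say as much: the passage from ``$c<\nu^*(H)\leq c+1$ plus complementary slackness'' to ``the support is a projective plane of order $c$ and $H$ equals its support'' is where all the content lies, and nothing in your outline pins down why the correction terms being strictly less than $1$ forces \emph{exact} regularity (edge sizes all $c+1$, pairwise intersections all of size $1$, degrees all $c+1$). Invoking ``rationality of the optimal solutions supplies the integrality needed'' is not an argument --- rational LP optima need not have integer-valued structural consequences without further combinatorial input. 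If you intend to prove this theorem rather than cite it, that step needs to be written out in full; otherwise, citing \cite{Fur} as the paper does is the appropriate move.
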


Combining this with Lemma~\ref{L:LVsFracIndepSet}, it is easy to show the following.

\begin{Lemma}\label{L:FurediConsequence}
Let $c$ be a positive integer and let $D$ be a covering design with no duplicated points such that each point occurs in at most $c+1$ blocks. If $L(D) < \frac{1}{c}$, then $D$ is a projective plane of order $c$.
\end{Lemma}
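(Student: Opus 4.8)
The plan is to deduce this directly from Theorem~\ref{T:Furedi} by dualizing. First I would let $H$ be the dual of $D$; since $D$ is a covering design, $H$ is intersecting, and since $D$ has no duplicated points, $H$ is simple. The condition that each point of $D$ occurs in at most $c+1$ blocks translates exactly to the statement that each edge of $H$ has size at most $c+1$, so the parameter $c' = \max\{|E|-1 : E \in \E(H)\}$ satisfies $c' \leq c$.

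Next I would apply Lemma~\ref{L:LVsFracIndepSet}(a), which gives $L(D) = 1/\nu^*(H)$. The hypothesis $L(D) < \frac{1}{c}$ therefore says $\nu^*(H) > c \geq c'$. By Theorem~\ref{T:Furedi} applied to $H$ with its parameter $c'$, the inequality $\nu^*(H) > c'$ forces $H$ to be a projective plane of order $c'$. A projective plane of order $c'$ is $(c'+1)$-uniform with $\nu^* = c'$ (each point has replication number $c'+1$ and a uniform weighting of the $c'^2 + c' + 1$ points gives a fractional matching of size $(c'^2+c'+1)/(c'+1) = c'$, which is optimal by Theorem~\ref{T:Furedi} itself, or one can just note it directly). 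Hence $\nu^*(H) = c' \leq c$, contradicting $\nu^*(H) > c$ unless in fact $c' = c$ and the strict inequality $\nu^*(H) > c$ fails — so I need to be a little careful here: the cleaner route is to observe that once $H$ is a projective plane of order $c'$ we have $\nu^*(H) = c'$, so $c < \nu^*(H) = c' \le c$, a contradiction, meaning the case $\nu^*(H) > c'$ with $H$ not a projective plane cannot occur and $H$ must be a projective plane with $c' = c$. Therefore $D$, being the dual of $H$, is itself a projective plane of order $c$ (projective planes are self-dual in the sense that the dual of a projective plane of order $c$ is again a projective plane of order $c$).

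The only genuine subtlety — and the main thing to get right — is the bookkeeping between $c$ and $c'$: the theorem's hypothesis bounds replication numbers by $c+1$, but Theorem~\ref{T:Furedi} is stated with the \emph{exact} maximum edge size, so one must argue that the projective-plane conclusion, which a priori could be of some smaller order $c' < c$, actually forces $c' = c$ via the value $\nu^* = c'$ and the assumption $L(D) < \frac1c$. Everything else is a routine translation through Lemma~\ref{L:LVsFracIndepSet}(a) and the observation that simplicity of the dual is equivalent to $D$ having no duplicated points, both of which are recorded in the excerpt. I would write the proof in three or four sentences along exactly these lines.
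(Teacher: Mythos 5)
Your overall route is the paper's: dualize, use Lemma~\ref{L:LVsFracIndepSet}(a) to translate $L(D)<\frac1c$ into $\nu^*(H)>c$, invoke Theorem~\ref{T:Furedi}, and pass back through the dual. You are also right that the one point needing care is the mismatch between the hypothesis ``each point in at most $c+1$ blocks'' and the theorem's exact parameter $c'=\max\{|E|-1\}$, which a priori could be smaller than $c$. But your resolution of that point contains a genuine error: the fractional matching number of a projective plane of order $c'$ is \emph{not} $c'$. The uniform weighting you describe has size $\frac{c'^2+c'+1}{c'+1}=c'+\frac{1}{c'+1}$, not $c'$ (you have divided $c'^2+c'+1$ as if it were $c'^2+c'$), and indeed $\nu^*$ of a projective plane of order $c'$ equals $c'+\frac{1}{c'+1}$ --- this is exactly why projective planes are the exceptional case in Theorem~\ref{T:Furedi}, and it is consistent with the paper's statement that $L$ of a projective plane of order $s$ is $\frac{s+1}{s^2+s+1}$, whose reciprocal is $s+\frac{1}{s+1}$. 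With your value $\nu^*(H)=c'$ the chain $c<\nu^*(H)=c'\le c$ is an outright contradiction, which would prove that the hypothesis $L(D)<\frac1c$ can never hold; since it manifestly does hold (for any projective plane of order $c$), that contradiction is a signal the computation is wrong, not a legitimate way to force $c'=c$. As written, your argument never actually establishes the lemma's conclusion.

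The fix is short: once Theorem~\ref{T:Furedi} forces $H$ to be a projective plane of order $c'$, you have $\nu^*(H)=c'+\frac{1}{c'+1}<c'+1$, and combining $c<\nu^*(H)<c'+1$ with $c'\le c$ gives $c'\le c<c'+1$, hence $c'=c$ since both are integers. Then $H$ is a projective plane of order $c$ and so is its dual $D$. (For what it is worth, the paper's own proof simply applies Theorem~\ref{T:Furedi} with parameter $c$ without addressing the $c$ versus $c'$ issue at all; you correctly spotted the gap but need the corrected value of $\nu^*$ to close it.)
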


\begin{proof}
Suppose that $L(D) < \frac{1}{c}$. By our hypotheses, the dual $H$ of $D$ is a simple intersecting hypergraph in which each edge has size at most $c+1$. Also $\nu^*(H)>c$ by Lemma~\ref{L:LVsFracIndepSet}(a) because $L(D) < \frac{1}{c}$. So by Theorem~\ref{T:Furedi}, $H$ is a projective plane of order $c$. The dual of a finite projective plane is another finite projective plane of the same order, so $D$ is also a projective plane of order $c$.
\end{proof}

We can now prove a result which establishes Theorem~\ref{T:projAff}(a) and also characterises the covering designs that can achieve the bound.

\begin{Lemma}\label{L:projPlanesEtc} Let $s$ be a positive integer, let $D=(X,\B)$ be covering design with $s^2+s+1$ blocks and no duplicated points, and let $w$ be a positive normalised weighting of $X$ such that $L(D,w)=\frac{s+1}{s^2+s+1}$. Then $D$ is a projective plane of order $s$.
\end{Lemma}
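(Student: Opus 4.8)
The plan is to use Lemma~\ref{L:FurediConsequence} with $c=s$, which means I must establish two facts about $D$: first, that $L(D)<\frac{1}{s}$, and second, that every point of $D$ occurs in at most $s+1$ blocks. The first is essentially immediate, since $L(D)\le L(D,w)=\frac{s+1}{s^2+s+1}<\frac{1}{s}$ (equivalently $s(s+1)<s^2+s+1$). The real work is the degree bound, and that is where Lemma~\ref{L:maxWeight} comes in.

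Here is the key computation I would carry out. Fix a point $x\in X$ with replication number $r_x$. Since $L(D,w)=\frac{s+1}{s^2+s+1}<1$, Lemma~\ref{L:maxWeight} applies, giving $r_x\ge 2$ and $w(x)\le \frac{r_x L(D,w)-1}{r_x-1}$. Summing this bound over all $x\in X$ and using normalisation ($\sum_x w(x)=1$), together with Lemma~\ref{L:avWeightBound} (which gives $\sum_x r_x w(x)\le (s^2+s+1)L(D,w)=s+1$), I want to extract a contradiction from the assumption that some $r_x\ge s+2$. The cleanest route: rearrange \eqref{E:vertWeightBound} as $w(x)(r_x-1)\le r_x L(D,w)-1$, i.e. $w(x)\le L(D,w)+\frac{w(x)-1}{r_x}$ — hmm, let me instead sum directly. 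From $w(x)\le\frac{r_x L-1}{r_x-1}$ with $L=\frac{s+1}{s^2+s+1}$, note $\frac{r_x L-1}{r_x-1}$ is increasing in $r_x$ precisely when $L<1$, with limit $L$ as $r_x\to\infty$; so each $w(x)$ is bounded, but I need the sum to be forced below $1$ unless all $r_x\le s+1$. The trick is that $\frac{r L-1}{r-1}$ equals $L - \frac{1-L}{r-1}$, so $1=\sum_x w(x)\le \sum_x\left(L-\frac{1-L}{r_x-1}\right)=(s^2+s+1)L - (1-L)\sum_x\frac{1}{r_x-1}$. With $(s^2+s+1)L=s+1$ and $1-L=\frac{s^2-1}{s^2+s+1}=\frac{(s-1)(s+1)}{s^2+s+1}$, this rearranges to $\sum_{x\in X}\frac{1}{r_x-1}\le \frac{s(s^2+s+1)}{(s-1)(s+1)}$ — actually I expect the bound to come out as $\sum_x \frac{1}{r_x-1}\le \frac{s(s^2+s+1)}{s^2-1}$, and I will need to combine this with counting the number of points and/or with Lemma~\ref{L:avWeightBound} to pin down that $|X|$ and the $r_x$ are forced into the projective-plane pattern.

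The main obstacle I anticipate is that the convexity/averaging argument above bounds $\sum\frac{1}{r_x-1}$ but does not by itself cap the individual $r_x$ — a single point of very high degree costs almost nothing in that sum. To rule this out I would bring in the dual picture via Lemma~\ref{L:LVsFracIndepSet}(a): the dual $H$ of $D$ is a simple intersecting hypergraph on $s^2+s+1$ vertices with $\nu^*(H)=1/L(D,w)=\frac{s^2+s+1}{s+1}>s$, and the edge of $H$ corresponding to a point $x$ has size $r_x$. If some $r_x\ge s+2$, then $c:=\max_E(|E|-1)\ge s+1$; but I cannot directly apply Theorem~\ref{T:Furedi} because it would only force $H$ to be a projective plane of order $c\ge s+1$, which has $c^2+c+1\ge (s+1)^2+(s+1)+1>s^2+s+1$ vertices — a contradiction with $|X|=s^2+s+1$. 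So in fact the cleanest argument is: \emph{assume} for contradiction some point has $r_x\ge s+2$; then $D$ (equivalently $H$) has maximum edge-deficiency $c\ge s+1$ and $\nu^*(H)>s+1$ would be needed for Furedi to even be relevant — instead I observe $\nu^*(H)=\frac{s^2+s+1}{s+1}\le s+1$ (since $s^2+s+1\le (s+1)^2$), so $\nu^*(H)\le c$ already, and Theorem~\ref{T:Furedi} is satisfied without $H$ being a projective plane, giving no contradiction that way. Let me reconsider: the honest approach is to first prove every $r_x\le s+1$ directly by a local counting argument around a single point (using that the $r_x$ points' worth of weight around $x$ must fit into $r_x$ blocks each of weight $\le L$, combined with an averaging bound across the whole design to force $r_x$ small), then invoke Lemma~\ref{L:FurediConsequence}.

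So the steps, in order: (1) record $L(D)\le L(D,w)=\frac{s+1}{s^2+s+1}<\frac1s$; (2) fix an arbitrary $x$ and, using Lemma~\ref{L:maxWeight} in the form $w(x)\le L-\frac{1-L}{r_x-1}$ summed over $X$ against $\sum_x w(x)=1$ and $(s^2+s+1)L=s+1$, derive the global constraint $\sum_{x\in X}\frac{1}{r_x-1}\le \frac{s(s^2+s+1)}{s^2-1}$; (3) combine with Lemma~\ref{L:avWeightBound}, which gives $\sum_x r_x w(x)\le s+1$, and with the fact (from Lemma~\ref{L:maxWeight}'s equality conditions, applied when the above inequalities are tight) that equality in \eqref{E:vertWeightBound} for every $x$ forces each pair of points to meet in exactly one block and every block through $x$ to have weight exactly $L$ — i.e.\ $D$ is resolvable into the structure of a linear space with all blocks of equal weight; (4) conclude $r_x=s+1$ for all $x$, all blocks have size $s+1$, $|X|=s^2+s+1$, and each pair of points lies in exactly one block, which is precisely the definition of a projective plane of order $s$; alternatively, once $r_x\le s+1$ for all $x$ is shown, simply invoke Lemma~\ref{L:FurediConsequence} with $c=s$ to finish immediately. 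The delicate point throughout will be handling the inequalities carefully enough that the equality case of Lemma~\ref{L:maxWeight} can be triggered — this is the step I expect to need the most care, since it requires showing that the chain of inequalities (normalisation, Lemma~\ref{L:avWeightBound}, Lemma~\ref{L:maxWeight}) is simultaneously tight, which is what pins $D$ down to be exactly a projective plane rather than merely a covering design with the right degree sequence.
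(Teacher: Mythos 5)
Your overall plan --- show that every point lies in at most $s+1$ blocks and then invoke Lemma~\ref{L:FurediConsequence} with $c=s$, using $L(D)\le L(D,w)=\frac{s+1}{s^2+s+1}<\frac{1}{s}$ --- is exactly the paper's strategy, but the proposal never actually proves the degree bound, and none of the routes you sketch for it works. Summing \eqref{E:vertWeightBound} over $X$ only controls $\sum_{x\in X}\frac{1}{r_x-1}$ (and your evaluation of that sum silently assumes $|X|=s^2+s+1$, which is not a hypothesis of the lemma); as you note yourself, a single point of very high degree is invisible to that sum. The dual/F\"uredi detour likewise yields nothing, as you also note. Your final outline then falls back on ``the equality conditions of Lemma~\ref{L:maxWeight}, applied when the above inequalities are tight,'' but you have not shown that those inequalities are tight, so steps (3) and (4) are circular as written.

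The missing idea is to read Lemma~\ref{L:maxWeight} in the opposite direction first. Since $0\le w(x)\le\frac{r_xL-1}{r_x-1}$ with $L=\frac{s+1}{s^2+s+1}$, any point with $r_xL<1$, i.e.\ with $r_x\le s$, would force $w(x)<0$; hence $r_x\ge s+1$ for \emph{every} $x\in X$. This is a \emph{lower} bound on the degrees. Lemma~\ref{L:avWeightBound} then converts it into the upper bound you need: $s+1=(s^2+s+1)L(D,w)\ge\sum_{x\in X}r_xw(x)\ge(s+1)\sum_{x\in X}w(x)=s+1$, so equality holds throughout, and since $w$ is positive this forces $r_x=s+1$ for every $x$. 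With no duplicated points, $r_x\le s+1$ everywhere and $L(D)<\frac{1}{s}$, Lemma~\ref{L:FurediConsequence} finishes the proof immediately; there is no need to extract the linear-space structure from the equality case of \eqref{E:vertWeightBound}, since that structural conclusion is delivered by F\"uredi's theorem.
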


\begin{proof}
Suppose that $D=(X,\B)$ is a covering design with $|\B|=s^2+s+1$ and no duplicated points, and that $w$ is a positive normalised weighting of the points in $X$ such that $L(D,w)=\frac{s+1}{s^2+s+1}$. So $L(D)=\frac{s+1}{s^2+s+1}$ by Theorem~\ref{T:HKTBound}. We must have $r_x \geq s+1$ for each $x \in X$ because otherwise, for some $x \in X$, we would have $r_x\, L(D,w) < 1$ and hence $w(x) < 0$  by Lemma~\ref{L:maxWeight}. So, since $L(D,w)=\frac{s+1}{s^2+s+1}$ and $w$ is positive, it must be the case that $r_x = s+1$ for each $x \in X$ by Lemma~\ref{L:avWeightBound}. Thus, by Lemma~\ref{L:FurediConsequence}, $D$ is a projective plane of order $s$ because $L(D,w)<\frac{1}{s}$.
\end{proof}

We now turn our attention to proving Theorem~\ref{T:projAff}(b). In this case we will make use of a result due to Bierbrauer, Marcugini and Pambianco \cite[Corollary~1]{BieMarPam} that extends Theorem~\ref{T:Furedi}.

\begin{Theorem}[{\cite[Corollary~1]{BieMarPam}}]\label{T:BieMarPam}
Let $c$ be a positive integer and let $H=(X,\E)$ be a simple intersecting hypergraph such that $\nu^*(H) = c$ and $|E| \leq c+1$ for each $E \in \E$. Then either $(X,\E)$ is an almost projective plane of order $c$ or there is a subset $\E'$ of $\E$ such that the dual of the hypergraph $(X,\E')$ is an affine plane of order $c$.
\end{Theorem}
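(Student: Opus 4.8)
This result is quoted from Bierbrauer, Marcugini and Pambianco, so to produce a proof I would treat it as the exact-boundary refinement of F\"uredi's Theorem~\ref{T:Furedi}. That theorem already tells us that a simple intersecting $H$ with all edges of size at most $c+1$ has $\nu^*(H)\le c$ unless $H$ is a projective plane of order $c$ (for which $\nu^*(H)=c+\tfrac{1}{c+1}>c$), so here we sit exactly on the boundary $\nu^*(H)=c$ and the task is to classify the extremal configurations. The natural engine is linear-programming duality together with complementary slackness: fix an optimal fractional matching $w$ of $H$ and an optimal fractional cover $t$ of $H$ (so $\sum_{x\in E}t(x)\ge 1$ for every $E\in\E$ and $\sum_{x\in X}t(x)=\nu^*(H)=c$), which exist and share a common value by LP duality; then every edge $E$ with $w(E)>0$ satisfies $\sum_{x\in E}t(x)=1$ and every vertex $x$ with $t(x)>0$ satisfies $\sum_{E\ni x}w(E)=1$.

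The first step is a normalisation: to show the optimal cover may be taken constant $\tfrac{1}{c+1}$ on its support. A tight edge has at most $c+1$ vertices and total $t$-weight exactly $1$, so a weight-shifting argument in the spirit of F\"uredi's critical-hypergraph reduction (Lemma~\ref{L:symCover}) should let us replace $t$ by an optimal cover with $t(x)\in\{0,\tfrac{1}{c+1}\}$. Writing $Y=\{x:t(x)>0\}$, this forces $|Y|=c(c+1)=c^2+c$; and since every edge $E$ is covered, $1\le\sum_{x\in E}t(x)=|E\cap Y|/(c+1)$, so $|E\cap Y|\ge c+1$, whence $E\subseteq Y$ and $|E|=c+1$. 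Discarding isolated vertices we get $X=Y$, so $H$ is a $(c+1)$-uniform intersecting hypergraph on exactly $c^2+c$ vertices with $\nu^*(H)=c$ in which every vertex is saturated by $w$.

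The second, decisive step is a combinatorial rigidity analysis of such hypergraphs. The claim to establish is that $H$ must be either (i) a covering design --- every pair of its vertices lying in a common edge --- in which case the parameters identify $H$ as an almost projective plane of order $c$, and moreover no further $(c+1)$-subset of $X$ can meet all its blocks, so $H$ carries no extra edges and equals this plane; or (ii) a hypergraph whose uncovered vertex-pairs fit a single rigid ``point-at-infinity'' pattern, so that deleting the few edges responsible for the failure of the covering property leaves a subfamily $\E'$ whose dual has the parameters of, and is therefore, an affine plane of order $c$ (the remaining edges of $H$ having the shape ``parallel class plus one line'', which is exactly why only a subset $\E'$ is claimed rather than all of $\E$). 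One then verifies there is no intermediate possibility.

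The hard part is this last step: cleanly separating the two families and excluding every near-extremal configuration that is neither an almost projective plane nor one of the affine-plane-derived hypergraphs of case (ii). This is where the Bierbrauer--Marcugini--Pambianco case analysis --- on how many vertex-pairs are multiply covered, and on the possible configurations of blocks through a ``missing'' point --- does its real work; I would expect everything upstream of it to be routine LP-duality manipulation, and this bookkeeping to be the only genuinely delicate ingredient.
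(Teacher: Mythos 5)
The paper does not prove this statement: Theorem~\ref{T:BieMarPam} is imported verbatim from \cite[Corollary~1]{BieMarPam} and used as a black box (its only role is to feed Lemma~\ref{L:BieMarPamConsequence}), so there is no internal proof to compare your attempt against. You correctly recognise this and instead sketch how the cited result might be established from F\"uredi's Theorem~\ref{T:Furedi} via LP duality.

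Judged as a proof, the sketch has two genuine gaps. First, the normalisation step --- replacing an optimal fractional cover $t$ by one taking only the values $0$ and $\frac{1}{c+1}$ --- is asserted with an appeal to ``a weight-shifting argument in the spirit of'' F\"uredi's reduction, but Lemma~\ref{L:symCover} is about thinning the \emph{edge set} of an intersecting hypergraph, not about flattening the dual LP optimum, and it is far from clear that a uniform optimal cover exists before one already knows the structural conclusion; everything downstream ($|Y|=c^2+c$, $E\subseteq Y$, $(c+1)$-uniformity of all edges) hangs on this unproved claim. Second, and more seriously, the dichotomy itself --- that a $(c+1)$-uniform intersecting hypergraph on $c^2+c$ saturated vertices with $\nu^*=c$ must be either an almost projective plane of order $c$ or contain a subfamily whose dual is an affine plane of order $c$, with no intermediate configuration --- is the entire content of the theorem, and you explicitly defer it as ``bookkeeping''. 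What you have is a plausible roadmap locating where the difficulty lies, not an argument that resolves it; for a result the paper itself only cites that is a defensible stance, but the proposal should be read as an outline rather than a proof.
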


\begin{Lemma}\label{L:BieMarPamConsequence}
Let $c$ be a positive integer and let $D$ be a covering design with no duplicated points such that each point occurs in at most $c+1$ blocks. If $L(D) = \frac{1}{c}$, then either $D$ is an almost projective plane of order $c$ or an affine plane of order $c$ can be obtained from $D$ by removing points.
\end{Lemma}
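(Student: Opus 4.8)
The plan is to follow the template of the proof of Lemma~\ref{L:FurediConsequence}, with Theorem~\ref{T:BieMarPam} in place of Theorem~\ref{T:Furedi}. Write $D=(X,\B)$ and let $H$ be its dual. Because $D$ is a covering design with no duplicated points, $H$ is a simple intersecting hypergraph, and the hypothesis that every point of $D$ lies in at most $c+1$ blocks says exactly that every edge of $H$ has size at most $c+1$. Lemma~\ref{L:LVsFracIndepSet}(a) gives $L(D)=1/\nu^{*}(H)$, so the hypothesis $L(D)=\frac1c$ forces $\nu^{*}(H)=c$. Hence $H$ satisfies the hypotheses of Theorem~\ref{T:BieMarPam}, and I would split into the two cases it provides.

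In the first case $H$ is an almost projective plane of order $c$, and I would simply translate its parameters back through duality. Since $D$ has no duplicated points, $D$ is isomorphic to the dual of $H$; so $D$ has $c^2+c$ points (one per edge of $H$) and $c^2+c$ blocks (the vertices of $H$), every point of $D$ has replication number $c+1$ (because $H$ is $(c+1)$-uniform), and every block of $D$ has size $c+1$ (because each vertex of $H$ lies in exactly $c+1$ edges). Combined with the standing assumption that $D$ is a covering design, this is precisely the definition of an almost projective plane of order $c$.

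In the second case there is a subset $\E'$ of the edge set of $H$ such that the dual of the sub-hypergraph of $H$ on all of its vertices with edge set $\E'$ is an affine plane of order $c$. The point is to identify this doubly dualised object concretely: a subset $\E'$ of the edges of $H$ corresponds, via the bijection $x\mapsto\B_x$, to a subset $X'$ of the points of $D$, and the dual of $(\B,\E')$ is then isomorphic to the covering design with point set $X'$ and block multiset $\{B\cap X':B\in\B\}$, which is exactly the design obtained from $D$ by removing the points of $X\setminus X'$. So an affine plane of order $c$ is obtained from $D$ by removing points, as required.

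I expect the only place real care is needed is the second case, in keeping straight the two nested dualisations — a sub-hypergraph of the dual of $D$, and then its dual — and in checking that the resulting operation on $D$ is genuinely the ``removing points'' operation of Section~\ref{S:projAff} rather than, say, deletion or restriction of blocks; once the correspondence $\E'\leftrightarrow X'$ is written out explicitly this is routine bookkeeping. The first case is pure unwinding of definitions, relying on the standard fact (already used in the proof of Lemma~\ref{L:FurediConsequence}) that a covering design with no duplicated points is its own double dual.
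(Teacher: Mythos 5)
Your proposal is correct and follows essentially the same route as the paper's proof: pass to the dual, apply Theorem~\ref{T:BieMarPam} via Lemma~\ref{L:LVsFracIndepSet}(a), and translate the two cases back through duality. The paper states the two translations more tersely (that the dual of an almost projective plane is another one, and that removing edges of $H$ corresponds to removing points of $D$), but your explicit bookkeeping fills in exactly the same steps.
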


\begin{proof}
Suppose that $L(D) = \frac{1}{c}$. By our hypotheses, the dual $H$ of $D$ is a simple intersecting hypergraph in which each edge has size at most $c+1$. Also $\nu^*(H)=c$ by Lemma~\ref{L:LVsFracIndepSet}(a) because $L(D) = \frac{1}{c}$. So by Theorem~\ref{T:BieMarPam}, the dual of an affine plane of order $c$ can be obtained from $H$ by removing edges or $H$ is an almost projective plane of order $c$. In the former case we immediately have that an affine plane of order $c$ can be obtained from $D$ by removing points. In the latter case, it follows from our definition of a finite almost projective plane that its dual is another finite almost projective plane of the same order, so $D$ is also an almost projective plane or order $c$.
\end{proof}

The next lemma establishes Theorem~\ref{T:projAff}(b) and also characterises the covering designs that can achieve the bound. It also shows that $L(m) > \frac{1}{s}$ for positive integers $m$ and $s$ with $m < s^2+s$, which will be useful in our proof of Theorem~\ref{T:newBound} in Section~\ref{S:newCBound}.

\begin{Lemma}\label{L:affinePlanesEtc}
Let $s \geq 2$ be an integer and let $D=(X,\B)$ be a covering design such that $L(D) \leq \frac{1}{s}$. Then $|\B| \geq s^2+s$. Furthermore, if $|\B|=s^2+s$, $D$ has no duplicated points and $w$ is a positive normalised weighting of $X$ such that $L(D,w) = \frac{1}{s}$, then either an affine plane of order $s$ can be obtained from $D$ by removing points or $D$ is an almost projective plane of order $s$.
\end{Lemma}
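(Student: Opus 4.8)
The plan is to handle both assertions by reducing to the situation in which the covering design has no duplicated points and carries a \emph{positive} normalised weighting attaining the data limit, and then feeding this into the two weight-counting lemmas (Lemmas~\ref{L:avWeightBound} and~\ref{L:maxWeight}) together with the consequences of Füredi's and Bierbrauer--Marcugini--Pambianco's theorems recorded in Lemmas~\ref{L:FurediConsequence} and~\ref{L:BieMarPamConsequence}. The key numerical input is that positivity of the weighting, via Lemma~\ref{L:maxWeight}, forces every replication number to be at least $s+1$.

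For the first claim (that $L(D) \leq \tfrac{1}{s}$ implies $|\B| \geq s^2+s$), I would first use Lemma~\ref{L:LVsFracIndepSet}(a) to get a normalised weighting $w_0$ of $X$ with $L(D,w_0)=L(D)$, and then apply Remark~\ref{R:removing} to obtain a covering design $D'=(X',\B')$ with no duplicated points and a positive normalised weighting $w'$ of $X'$ with $L(D',w')=L(D)\leq\tfrac1s$; crucially the operations in Remark~\ref{R:removing} leave the block multiset cardinality unchanged, so $|\B'|=|\B|$. Since $s\geq 2$ we have $L(D',w')<1$, so Lemma~\ref{L:maxWeight} applies, and positivity of $w'$ gives $r_x\,L(D',w')>1$, hence $r_x\geq s+1$, for every $x\in X'$. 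Then Lemma~\ref{L:avWeightBound} yields $|\B'|\,L(D',w') \geq \sum_{x\in X'} r_x w'(x) \geq (s+1)\sum_{x\in X'}w'(x) = s+1$, so $|\B| = |\B'| \geq (s+1)/L(D',w') \geq s^2+s$.

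For the second claim, $D$ already has no duplicated points and $w$ is positive with $L(D,w)=\tfrac1s<1$, so the same argument runs directly on $D$: Lemma~\ref{L:maxWeight} gives $r_x\geq s+1$ for all $x\in X$, and now Lemma~\ref{L:avWeightBound} reads $(s^2+s)\cdot\tfrac1s = s+1 \geq \sum_{x\in X} r_x w(x) \geq s+1$, forcing equality throughout and in particular $r_x=s+1$ for every $x\in X$. Before invoking Lemma~\ref{L:BieMarPamConsequence} I need to know that $L(D)$ equals $\tfrac1s$ rather than being strictly smaller: if $L(D)<\tfrac1s$, then since $D$ has no duplicated points and every point lies in exactly $s+1$ blocks, Lemma~\ref{L:FurediConsequence} with $c=s$ would make $D$ a projective plane of order $s$, which has $s^2+s+1\neq s^2+s$ blocks, a contradiction. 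Hence $L(D)=\tfrac1s$, and Lemma~\ref{L:BieMarPamConsequence} with $c=s$ gives exactly that $D$ is an almost projective plane of order $s$ or an affine plane of order $s$ can be obtained from $D$ by removing points.

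I do not expect a serious obstacle here; the proof is essentially bookkeeping around the two counting lemmas. The points that need care are: checking that the reduction of Remark~\ref{R:removing} genuinely preserves $|\B|$ (it changes $X$ and can cause distinct blocks to coincide, but the block multiset keeps its size); verifying the hypothesis $L(D,w)<1$ needed by Lemma~\ref{L:maxWeight}, which is where $s\geq 2$ is used; and noticing that ruling out $L(D)<\tfrac1s$ is necessary and is cleanly dispatched by comparing block counts of a projective plane of order $s$ with $s^2+s$.
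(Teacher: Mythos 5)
Your proof is correct and follows essentially the same route as the paper: Lemma~\ref{L:maxWeight} forces $r_x \geq s+1$ at every positively weighted point, Lemma~\ref{L:avWeightBound} then gives $|\B| \geq s^2+s$, and Lemma~\ref{L:BieMarPamConsequence} supplies the characterisation in the equality case. The only cosmetic differences are that the paper proves the first claim by restricting the sums to the set of positively weighted points rather than invoking Remark~\ref{R:removing}, and it pins down $L(D)=\frac{1}{s}$ via the lower bound of Theorem~\ref{T:HKTBound} rather than via your (equally valid) block-count contradiction through Lemma~\ref{L:FurediConsequence}.
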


\begin{proof}
Suppose $w$ is a normalised weighting of $X$ such that $L(D,w) \leq \frac{1}{s}$. Let $X^*={\{x \in X:w(x)>0\}}$. For each $x \in X^*$, we must have $r_x \geq s+1$ (otherwise we would have $w(x) \leq 0$, using Lemma~\ref{L:maxWeight} and $L(D,w) \leq \frac{1}{s}$). Thus, since $L(D,w) \leq \frac{1}{s}$, Lemma~\ref{L:avWeightBound} implies that
\begin{equation}\label{E:forcingAffine}
\mfrac{|\B|}{s} \geq |\B|\,L(D,w) \geq \sum_{x \in X}r_xw(x)=\sum_{x \in X^*}r_xw(x) \geq (s+1)\sum_{x \in X^*}w(x)=s+1.
\end{equation}
and it follows that $|\B| \geq s^2+s$.

Now further suppose that $|\B|=s^2+s$, $D$ has no duplicated points and $w$ is positive. Then $X^*=X$ and we also must have equality throughout \eqref{E:forcingAffine}. It follows that $L(D,w)=\frac{1}{s}$ and $r_x = s+1$ for each $x \in X$. So $L(D)=\frac{1}{s}$ by Theorem~\ref{T:HKTBound}. By Lemma~\ref{L:BieMarPamConsequence}, either an affine plane of order $s$ can be obtained from $D$ by removing points or $D$ is an almost projective plane of order $s$.
\end{proof}

It is now not difficult to prove Theorem~\ref{T:projAff} from Lemmas~\ref{L:projPlanesEtc} and \ref{L:affinePlanesEtc}.

\begin{proof}[\textup{\textbf{Proof of Theorem~\ref{T:projAff}}}]
We first prove part (a). If there exists a projective plane of order $s$, then $L(s^2+s+1)=\frac{s+1}{s^2+s+1}$ by Theorem~\ref{T:HKTBound}. Conversely, suppose that $L(s^2+s+1)=\frac{s+1}{s^2+s+1}$. Then there is a covering design $D=(X,\B)$ with $|\B|=s^2+s+1$ and a normalised weighting $w$ of $X$ such that $L(D,w)=\frac{s+1}{s^2+s+1}$. By Remark~\ref{R:removing}, by (if necessary) removing points from $D$, we can obtain a covering design $D'=(X',\B')$ with no duplicated points and a normalised positive weighting $w'$ of $D'$ such that $L(D',w')=\frac{s+1}{s^2+s+1}$. Then, by Lemma~\ref{L:projPlanesEtc}, $D'$ is a projective plane of order $s$. Furthermore, if $|X|=s^2+s+1$, then, noting that we also have $|X'|=s^2+s+1$ since $D'$ is a projective plane of order $s$, it must be the case that $X=X'$ and $D=D'$.

Now we prove part (b). If there exists an affine plane of order $s$, then $L(s^2+s)=\frac{1}{s}$ by Theorem~\ref{T:HKTBound}. Also, if there exists an almost projective plane of order $s$, then assigning each of its points weight $\frac{1}{s^2+s}$ shows that $L(s^2+s)=\frac{1}{s}$ in view of the lower bound from Theorem~\ref{T:HKTBound}. Conversely, suppose that $L(s^2+s)=\frac{1}{s}$. Then there is a covering design $D=(X,\B)$ with $|\B|=s^2+s$ and a normalised weighting $w$ of $X$ such that $L(D,w)=\frac{1}{s}$. As in the proof of (a), by (if necessary) removing points from $D$, we can obtain a covering design $D'=(X',\B')$ with no duplicated points and a normalised positive weighting $w'$ of $D'$ such that $L(D',w')=\frac{1}{s}$. Then, by Lemma~\ref{L:affinePlanesEtc}, either an affine plane of order $s$ can be obtained from $D'$ by removing points or $D'$ is an almost projective plane of order $s$. So either an affine plane of order $s$ or an almost projective plane of order $s$ exists. Furthermore, if $|X|=s^2$, then, noting that we also have $|X'|=s^2$ since $D'$ is an affine plane of order $s$, it must be the case that $X=X'$ and $D=D'$.
\end{proof}

We noted in the introduction that almost projective planes of orders $2$ and $3$ exist but no example having order greater than 3 has been found (see \cite{BieMarPam,BloJunSch}). The following example uses an almost projective plane of order $3$. It emphasises that there do exist covering designs $D$ that have $s^2+s$ blocks and $L(D)=\frac{1}{s}$, but which are not affine planes.

\begin{Example}
\label{X:Z12}
Consider the covering design $D=(X,\B)$ where $X=\mathbb{Z}_{12}$ and $\B=\{\{i,1+i,4+i,6+i\}:i \in \mathbb{Z}_{12}\}$. Then $|X|=|\B|=12$ and $r_x=4$ for each $x \in X$, so $D$ is an almost projective plane of order 3. Because $L(m)=\frac{1}{3}$ by Theorem~\ref{T:HKTBound}, assigning each point weight $\frac{1}{12}$ shows that $L(D)=\frac{1}{3}$.
\end{Example}

Bruck and Ryser \cite{BruRys} famously showed that projective planes of certain orders cannot exist.

\begin{Theorem}[{\cite{BruRys}}]\label{T:PPnonexistence}
If a projective plane of order $s$ with $s \equiv 1,2 \mod{4}$ exists, then $s$ is a sum of two squares.
\end{Theorem}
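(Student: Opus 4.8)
The plan is to recast the existence of a projective plane of order $s$ as a constraint on a rational quadratic form and then run the classical Bruck--Ryser argument, which combines Lagrange's four-square theorem with a descent on the number of variables. Let $n := s^2+s+1$ and let $A$ be the $n \times n$ point--block incidence matrix of the plane. Since any two points lie on exactly one common block and each point lies on exactly $s+1$ blocks, $A A^{\mathsf T} = sI + J$, where $J$ is the all-ones matrix; this matrix has eigenvalues $(s+1)^2$ (once) and $s$ (with multiplicity $n-1$), all nonzero, so $\det A \neq 0$ and the substitution $y = A^{\mathsf T} x$ is invertible over $\mathbb{Q}$. Expanding $x^{\mathsf T}(A A^{\mathsf T})x = (A^{\mathsf T}x)^{\mathsf T}(A^{\mathsf T}x)$ then gives the rational identity
\[ s\sum_{i=1}^n x_i^2 + \Bigl(\sum_{i=1}^n x_i\Bigr)^2 \;=\; \sum_{i=1}^n y_i^2, \]
in which each $y_i$ is an integral linear form in $x_1,\dots,x_n$ and the map $x \mapsto y$ is $\mathbb{Q}$-invertible.

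Next I would use that $s \equiv 1,2 \pmod 4$ is exactly the case $n \equiv 3 \pmod 4$, so $n+1 \equiv 0 \pmod 4$. Introduce an auxiliary variable $x_{n+1}$, set $w := \sum_{i=1}^n x_i$, and add $s x_{n+1}^2$ to both sides of the identity. Using Lagrange's theorem to write $s = a^2+b^2+c^2+d^2$ and applying Euler's four-square identity to each of the $(n+1)/4$ blocks of four consecutive variables among $x_1,\dots,x_{n+1}$ --- each such substitution having determinant $\pm s^2 \neq 0$ --- produces a $\mathbb{Q}$-invertible substitution $x \mapsto z$ with $s\sum_{i=1}^{n+1} x_i^2 = \sum_{i=1}^{n+1} z_i^2$, the $z_i$ being integral linear forms. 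Inverting this substitution, the identity takes the form
\[ \sum_{i=1}^{n+1} z_i^2 + w^2 \;=\; \sum_{i=1}^n y_i^2 + s x_{n+1}^2 , \]
where now $y_1,\dots,y_n$, $w$, and $x_{n+1}$ are all rational linear forms in the independent variables $z_1,\dots,z_{n+1}$.

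The core of the proof is then a descent. For $j = 1,\dots,n$ in turn, $y_j$ is (after the previous substitutions) a rational linear form in $z_j,\dots,z_{n+1}$; choosing $\varepsilon_j \in \{+1,-1\}$ so that $z_j$ has nonzero coefficient in $\varepsilon_j y_j - z_j$ (at most one sign can fail), I solve $z_j = \varepsilon_j y_j$ for $z_j$ in terms of $z_{j+1},\dots,z_{n+1}$ and substitute, which cancels the matching terms $z_j^2$ and $y_j^2$. After all $n$ steps, $w$ and $x_{n+1}$ have become rational multiples $\alpha z_{n+1}$ and $\beta z_{n+1}$ of the surviving variable, and the identity collapses to $1 + \alpha^2 = s\beta^2$ with $\alpha,\beta \in \mathbb{Q}$; since the left-hand side is positive we get $\beta \neq 0$, so $s = (1/\beta)^2 + (\alpha/\beta)^2$ is a sum of two rational squares, and clearing denominators produces integers $p,q,r$ with $r \neq 0$ and $p^2 + q^2 = s r^2$. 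Since a positive integer is a sum of two integer squares exactly when every prime congruent to $3 \pmod 4$ divides it to an even power, the identity $p^2+q^2 = sr^2$ forces this condition on $s$, completing the proof. I expect the delicate points to be the descent itself --- verifying that a legitimate sign $\varepsilon_j$ is always available, that every form stays rational, and that $w$ and $x_{n+1}$ genuinely collapse to multiples of $z_{n+1}$ so that no trivial $0=0$ results --- together with the standard but not quite trivial passage from a rational to an integral sum-of-two-squares representation; the remaining manipulations with Lagrange's and Euler's identities are routine.
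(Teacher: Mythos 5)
The paper does not prove this statement: it is quoted verbatim as the classical Bruck--Ryser theorem and attributed to \cite{BruRys}, so there is no internal proof to compare against. Your argument is the standard Bruck--Ryser proof from that source, and it is correct: the identity $s\sum x_i^2+(\sum x_i)^2=\sum y_i^2$ follows from $AA^{\mathsf T}=sI+J$ (whose determinant $(s+1)^2s^{n-1}$ is nonzero, so the change of variables is invertible); the congruence $s\equiv 1,2\pmod 4$ gives $4\mid n+1$ so the Lagrange--Euler four-square substitutions apply in blocks; the sign choice $\varepsilon_j$ in the descent always exists because $\varepsilon_j c_j-1$ can vanish for at most one sign; and the terminal identity $1+\alpha^2=s\beta^2$ cannot be trivial since its left side is at least $1$. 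The final passage from $p^2+q^2=sr^2$ to $s$ being a sum of two integer squares via the prime-power characterisation is also sound. No gaps.
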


 Blokhuis, Jungnickel and Schmidt \cite[Proposition 1.2]{BloJunSch} used results of Bose and Connor \cite{BosCon} to prove a similar result for almost projective planes.

\begin{Theorem}[{\cite{BloJunSch}}]\label{T:APPnonexistence}
Suppose an almost projective plane of order $s$ exists and let $m = \binom{s+1}{2}$. When $s \equiv 0,3 \mod{4}$, we have that $s+1$ is a square and, further, if $m \equiv 2 \mod{4}$, then $s-1$ is a sum of two squares. When $s \equiv 1,2 \mod{4}$, then $s-1$ is a square, and there are integers $x$, $y$ and $z$ such that
\[(s+1)x^2 + (-1)^{m(m-1)/2}\, 2y^2 = z^2.\]
\end{Theorem}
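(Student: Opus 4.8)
The plan is to read off, from the combinatorial hypotheses, an incidence-matrix identity of exactly the shape treated by the rational-congruence arguments of Bruck--Ryser (Theorem~\ref{T:PPnonexistence}) and of Bose and Connor \cite{BosCon}, and then to unwind that machinery in the present setting. First I would record a structural fact: if $D=(X,\B)$ is an almost projective plane of order $s$, then the pairs of points lying in two common blocks form a perfect matching on $X$. Indeed, the $s+1$ blocks through a fixed point $x$ contribute $s(s+1)=|X|-1$ point-slots distinct from $x$, each of the other $|X|-1$ points is covered at least once, and since $\sum_{y\neq x}(\lambda_{xy}-1)=1$ with nonnegative summands, exactly one point $x'$ is covered twice and none is covered three or more times; as ``covered twice'' is symmetric this gives a perfect matching, and $D$ is a symmetric group-divisible design with $m=\binom{s+1}{2}$ groups of size $2$, with concurrences $2$ within a group and $1$ between groups. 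Writing $A$ for the point--block incidence matrix (which is square, since $|X|=|\B|=s^2+s$) and $P$ for the permutation matrix of the matching, this reads
\[ AA^{\mathsf T}=sI+J+P, \]
where $P=P^{\mathsf T}$, $P^2=I$, $P$ is fixed-point-free, and $PJ=J$.

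Next I would diagonalise the spectrum of $sI+J+P$: the all-ones vector gives eigenvalue $(s+1)^2$, the rest of the $(+1)$-eigenspace of $P$ gives eigenvalue $s+1$ with multiplicity $m-1$, and the $(-1)$-eigenspace of $P$ gives eigenvalue $s-1$ with multiplicity $m$. Hence $\det(AA^{\mathsf T})=(s+1)^{m+1}(s-1)^m$, and since $A$ is integral this must be a perfect square. A short case analysis (using $\gcd(s-1,s+1)\leq 2$ and the fact that $m=\tfrac12 s(s+1)$ is even precisely when $s\equiv 0,3\pmod 4$) converts this into the first pair of assertions: $s+1$ is a square when $s\equiv 0,3\pmod 4$, and $s-1$ is a square when $s\equiv 1,2\pmod 4$.

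For the finer conditions I would invoke rational congruence. Since $\det(AA^{\mathsf T})\neq 0$ for $s\geq 2$, we have $A\in GL_{s^2+s}(\mathbb{Q})$, so $AA^{\mathsf T}=A\,I\,A^{\mathsf T}$ is congruent over $\mathbb{Q}$ to the sum of $s^2+s$ squares. Passing, inside each matched pair, to sum and difference basis vectors block-diagonalises $AA^{\mathsf T}$ over $\mathbb{Q}$ as $\bigl(2(s+1)I_m+4J_m\bigr)\oplus 2(s-1)I_m$, and then the elementary congruence $aI_m+bJ_m\cong_{\mathbb{Q}}\langle m(a+bm)\rangle\perp\langle a\cdot 1\cdot 2\rangle\perp\langle a\cdot 2\cdot 3\rangle\perp\cdots\perp\langle a(m-1)m\rangle$ (proved by induction using the basis $\mathbf{1},e_1-e_2,\dots,e_1-e_m$) puts the whole form in explicit diagonal shape. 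Equating Hasse--Minkowski invariants (positive definiteness at $\infty$ is automatic; determinant modulo squares was handled above; Hasse symbol at every finite $p$) with those of $I_{s^2+s}$ and feeding in the square condition, all but one Hilbert-symbol condition becomes vacuous. When $s\equiv 0,3\pmod 4$ the survivor is nontrivial only when $m\equiv 2\pmod 4$, where it says $(2(s-1),-1)_p=1$ for all $p$; by Hasse--Minkowski (a positive integer is a sum of two squares over $\mathbb{Q}$ iff over every $\mathbb{Q}_p$) this is exactly ``$s-1$ is a sum of two squares''. When $s\equiv 1,2\pmod 4$ the survivor is $(s+1,\varepsilon\cdot 2)_p=1$ for all $p$, for an accumulated sign $\varepsilon=\pm1$; by Hasse--Minkowski this is isotropy over $\mathbb{Q}$ of $(s+1)x^2+\varepsilon 2 y^2-z^2$, i.e.\ the stated integral solvability of $(s+1)x^2+\varepsilon 2 y^2=z^2$.

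The main obstacle is this last step, and in particular the identification $\varepsilon=(-1)^{m(m-1)/2}$. This sign collects the ``internal'' Hilbert symbols $\bigl(k(k+1),\ell(\ell+1)\bigr)_p$ arising from the factorial-type diagonal entries produced by the $J_m$-diagonalisation, together with the signed-determinant (discriminant) corrections for the rank-$m$ blocks and the factors of $2$ introduced when each matched pair was split; carrying all of this through without a slip across the three residue classes of $m$ modulo $4$ that are relevant within each case is where the real care lies. I would verify the final formula against the two known almost projective planes: $s=2$, where it predicts solvability of $3x^2-2y^2=z^2$ (e.g.\ $(1,1,1)$), and $s=3$, where it predicts $s+1=4$ a square and $s-1=2=1^2+1^2$ a sum of two squares, so that neither known plane is excluded.
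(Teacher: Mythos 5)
This theorem is not proved in the paper: it is quoted from Blokhuis, Jungnickel and Schmidt \cite{BloJunSch}, who derive it from the rational-congruence results of Bose and Connor \cite{BosCon}, so your reconstruction has to be measured against that source rather than against an in-paper argument. Your set-up is exactly theirs and is correct: the counting argument showing that the doubly-covered pairs form a perfect matching is right ($\sum_{y\neq x}(\lambda_{xy}-1)=1$ with nonnegative summands), so an almost projective plane of order $s$ is a symmetric group divisible design with $m=\binom{s+1}{2}$ groups of size $2$ and concurrences $2$ and $1$; the identity $AA^{\mathsf T}=sI+J+P$, the spectrum $(s+1)^2,\ (s+1)^{m-1},\ (s-1)^{m}$, and the deduction of the two ``perfect square'' conclusions from the fact that $\det(AA^{\mathsf T})=(\det A)^2$ is a square are all correct and complete. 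That part of the theorem you have genuinely proved, and the parity translation ($m$ even iff $s\equiv 0,3\pmod 4$) is handled correctly.

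The remaining conclusions --- that $s-1$ is a sum of two squares when $m\equiv 2\pmod 4$, and the solvability of $(s+1)x^2+(-1)^{m(m-1)/2}\,2y^2=z^2$ --- are precisely the Hasse--Minkowski/Bose--Connor part, and here your proposal is a plan rather than a proof. The block-diagonalisation of the form over $\mathbb{Q}$ as $\bigl(2(s+1)I_m+4J_m\bigr)\oplus 2(s-1)I_m$ checks out, but the entire arithmetical content of the finer conditions lives in the Hilbert-symbol bookkeeping you defer: which local conditions survive once the square conditions are imposed, and why the accumulated sign is exactly $(-1)^{m(m-1)/2}$. You flag this yourself as the ``main obstacle,'' and without it the theorem as stated is not derived; checking the formula against $s=2$ and $s=3$ is no verification, since those are the two orders for which almost projective planes exist and hence for which \emph{any} correct necessary condition must be satisfiable. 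So: right structure, right method, first half complete, second half outstanding. The efficient way to close the gap is to note that your matching decomposition exhibits $D$ as a symmetric regular group divisible design with parameters $v=b=s^2+s$, $r=k=s+1$, $\lambda_1=2$, $\lambda_2=1$, $m=\binom{s+1}{2}$, $n=2$ (regularity holds since $r-\lambda_1=s-1>0$ and $rk-\lambda_2 v=s+1>0$) and then quote the Bose--Connor nonexistence theorem verbatim, which is exactly what \cite{BloJunSch} do.
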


For a given positive integer $s$, the consequences of Theorem~\ref{T:projAff} vary according to the state of knowledge about whether a projective plane of order $s$ exists.
\begin{enumerate}
    \item
If $s$ is a power of a prime, a projective plane of order $s$ exists and so we have $L(s^2+s)=\frac{1}{s}$ and $L(s^2+s+1)=\frac{s+1}{s^2+s+1}$.
    \item
If it is not known whether a projective plane of order $s$ exists, we cannot determine whether either of $L(s^2+s)=\frac{1}{s}$ or $L(s^2+s+1)=\frac{s+1}{s^2+s+1}$ holds since no almost projective plane of order greater than 3 is known.
    \item
If Theorem~\ref{T:PPnonexistence} rules out the existence of a projective plane of order $s$, then Theorem~\ref{T:APPnonexistence} also establishes the nonexistence of an almost projective plane of order $s$ (any order allowed to exist by Theorem~\ref{T:APPnonexistence} is $i^2+1$ or $i^2-1$ for some $i$ and hence either is a sum of two squares or is congruent to 0 or 3 modulo 4). Thus, we have that $L(s^2+s) \neq \frac{1}{s}$ and $L(s^2+s+1) \neq \frac{s+1}{s^2+s+1}$ in these situations.
    \item
The nonexistence of a projective plane of order 10 has been established using heavy computation \cite{LamThiSwi}. However an almost projective plane of order 10 is not ruled out by Theorem~\ref{T:APPnonexistence} (note that $(x,y,z)=(1,1,3)$ is a solution to $11x^2-2y^2=z^2$). So, if $s=10$, we have $L(s^2+s+1) \neq \frac{s+1}{s^2+s+1}$ but we do not know whether $L(s^2+s) = \frac{1}{s}$ or not.
\end{enumerate}

\section{An improved bound on the data limit}\label{S:newCBound}

Our main goal in this section is to prove Theorem~\ref{T:newBound}. However, we first note that Theorem~\ref{T:HKTBound} is already enough to give the asymptotic behaviour of the data limit.

\begin{Corollary}
As $m \rightarrow \infty$, we have $L(m) = (1+o(1))m^{-1/2}$.
\end{Corollary}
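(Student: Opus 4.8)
The plan is to derive matching upper and lower bounds, each of the form $(1+o(1))m^{-1/2}$, from Theorem~\ref{T:HKTBound} together with the monotonicity of $L(m)$ and the prime number theorem.

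\textbf{Lower bound.} Set $s=\lfloor\sqrt m\rfloor$, so $s^2\le m<(s+1)^2$. Then $\frac1s\ge\frac1{\sqrt m}=m^{-1/2}$, and also $\frac{s+1}{m}>\frac{\sqrt m}{m}=m^{-1/2}$ since $s+1>\sqrt m$. Hence the right-hand side of \eqref{E:HKTBound} is at least $m^{-1/2}$, and so $L(m)\ge m^{-1/2}$ for every $m$; in particular $L(m)\ge(1-o(1))m^{-1/2}$.

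\textbf{Upper bound.} Fix a large $m$ and let $s$ be the largest prime with $s^2+s+1\le m$; such a prime exists once $m\ge 3$, and $s\to\infty$ as $m\to\infty$. Since $s$ is a prime power there is a projective plane of order $s$, so by Theorem~\ref{T:HKTBound} we have $L(s^2+s+1)=\frac{s+1}{s^2+s+1}$. As $L$ is nonincreasing and $s^2+s+1\le m$,
\[
L(m)\le L(s^2+s+1)=\frac{s+1}{s^2+s+1}=(1+o(1))\,s^{-1}.
\]
It remains to check $s=(1+o(1))\sqrt m$. On the one hand $m\ge s^2+s+1>s^2$. On the other hand, if $s'$ denotes the least prime exceeding $s$, then maximality of $s$ gives $m<(s')^2+s'+1$, while the prime number theorem yields $s'/s\to1$ as $s\to\infty$, so $(s')^2+s'+1=(1+o(1))s^2$. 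Thus $s^2<m\le(1+o(1))s^2$, i.e.\ $s^{-1}=(1+o(1))m^{-1/2}$, whence $L(m)\le(1+o(1))m^{-1/2}$. Combining the two bounds gives $L(m)=(1+o(1))m^{-1/2}$.

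\textbf{Main obstacle.} The only nontrivial ingredient is the estimate $s'/s\to1$, i.e.\ that consecutive primes differ by a $(1+o(1))$ factor; this is a standard consequence of the prime number theorem (one could alternatively invoke any known result on primes in short intervals, or simply replace ``prime'' by ``prime power'' throughout, which only makes the relevant set denser and requires no more than Bertrand's postulate combined with the same factor-$(1+o(1))$ reasoning). Everything else is elementary manipulation of the bound in Theorem~\ref{T:HKTBound} together with the fact that $L(m)$ is nonincreasing in $m$.
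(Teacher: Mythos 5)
Your proof is correct and follows the same overall strategy as the paper: the lower bound falls directly out of Theorem~\ref{T:HKTBound} exactly as you argue, and the upper bound comes from locating a prime $p=(1+o(1))\sqrt{m}$ and combining the monotonicity of $L$ with the exact value of $L$ at the block count of a plane of order $p$. The only substantive difference is the number-theoretic input: you invoke the prime number theorem (in the form $p_{n+1}/p_n\to 1$ for consecutive primes), whereas the paper uses the Baker--Harman--Pintz theorem on primes in short intervals, which yields the explicit rate $L(m)\le m^{-1/2}+O(m^{-0.7375})$; your version suffices for the stated $o(1)$ conclusion but gives no quantitative error term. (Your use of projective rather than affine planes is immaterial, and your fallback remark about prime powers and Bertrand's postulate would not by itself give the needed $(1+o(1))$ factor --- Bertrand only gives a factor of $2$ --- but you correctly rely on the PNT for that step.)
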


\begin{proof}
Let $m$ be a positive integer. Since $ \sqrt{m}-1 \leq \lfloor \sqrt{m} \rfloor \leq \sqrt{m}$, it follows from Theorem~\ref{T:HKTBound} that $L(m) \geq m^{-1/2}$. We will complete the proof by showing that  $L(m) \leq (1+o(1))m^{-1/2}$. Let $q=\lfloor \sqrt{m} - \frac{1}{2} \rfloor$. Provided that $m$ is sufficiently large, by the main result of \cite{BakHarPin}, there is a prime $p$ such that $q-q^{0.525} \leq p \leq q$. So there is an affine plane of order $p$ and $L(p^2+p)=\frac{1}{p}$ by Theorem~\ref{T:HKTBound}. Thus, because $p \leq q \leq \sqrt{m} - \frac{1}{2}$, we have $p^2+p \leq m$ and hence $L(m) \leq L(p^2+p)=\frac{1}{p}$. Now $q \geq \sqrt{m} - \frac{3}{2}$ and hence
\[p \geq \sqrt{m} - \tfrac{3}{2} - \left(\sqrt{m} - \tfrac{3}{2}\right)^{0.525} \geq \sqrt{m} - m^{0.2625} - \tfrac{3}{2}.\]
Thus
\[L(m) \leq \mfrac{1}{p} \leq \mfrac{1}{\sqrt{m}}+O\left(m^{-0.7375}\right). \qedhere\]
\end{proof}

We now turn our attention to proving Theorem~\ref{T:newBound}. Throughout this section we define $F:[2,\infty) \rightarrow \mathbb{R}$ to be the function given by
\[F(m) = \mfrac{s^2 + (2m-1)s - 1  + \sqrt{(s^2-s-1)^2 + 4m(s-1)(s^2 + s + 1 - m)}}{2 m (s^2+s-1)}\]
where $s$ is the unique positive integer such that $s^2-s+1< m \leq s^2+s+1$. The right hand side of \eqref{E:newBound} is exactly $F(m)$. Note that $F(m)=1$ for $m \in \{2,3\}$. We establish some other basic properties of $F$.

\begin{Lemma}\label{L:boundFacts}\phantom{a}
\begin{itemize}
    \item[\textup{(a)}]
For each positive integer $s$, $F(s^2+s)=\frac{1}{s}$ and $F(s^2+s+1)=\frac{s+1}{s^2+s+1}$.
    \item[\textup{(b)}]
$F$ is continuous and monotonically decreasing on $[3,\infty)$.
    \item[\textup{(c)}]
For each integer $m \geq 3$, $F(m)$ is at least the bound of Theorem~\ref{T:HKTBound}, with equality if and only if $m \in \{s^2+s,s^2+s+1\}$ for some positive integer $s$.
\end{itemize}
\end{Lemma}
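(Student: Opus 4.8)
The plan is to establish the three parts in order, using (a) in the proof of (b) and both (a) and (b) in the proof of (c). For (a) I would substitute $m=s^2+s$ and $m=s^2+s+1$ directly into the formula for $F$. The essential point is that the radicand is a perfect square in each case: at $m=s^2+s+1$ the factor $s^2+s+1-m$ vanishes and the radicand is $(s^2-s-1)^2$, while at $m=s^2+s$ a short expansion gives radicand $(s^2+s-1)^2$. In both cases the numerator then simplifies to $2(s+1)(s^2+s-1)$ and cancels the factor $2(s^2+s-1)$ in the denominator, leaving $\tfrac{s+1}{s^2+s+1}$ and $\tfrac1s$ respectively. (For $s=1$, that is $m\in\{2,3\}$, one has $s^2-s-1<0$; these degenerate cases are verified by hand.)

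For (b), fix $s$ and work on the interval $I_s=(s^2-s+1,\,s^2+s+1]$ of values of $m$ using this $s$, writing $F(m)=\tfrac{A(m)+\sqrt{\Delta(m)}}{2(s^2+s-1)m}$ where $A(m)=s^2+(2m-1)s-1$ and $\Delta(m)=(s^2-s-1)^2+4(s-1)(s^2+s+1-m)m$ are polynomials in $m$. I would first note that $\Delta(m)>0$ on $I_s$: for $s\geq2$, $\Delta$ is a concave quadratic in $m$ with the positive value $(s^2-s-1)^2$ at both $m=0$ and $m=s^2+s+1$, hence positive on all of $[0,s^2+s+1]\supseteq I_s$. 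So $F$ is smooth on each $I_s$, and continuity of $F$ reduces to checking agreement at the breakpoints $m_0=s^2+s+1=(s+1)^2-(s+1)+1$: by (a) the value from $I_s$ is $\tfrac{s+1}{s^2+s+1}$, and the limit from $I_{s+1}$, computed by the same method at the left endpoint of $I_{s+1}$ where the radicand is again a perfect square (one obtains $(3s^2+3s+1)^2$), also equals $\tfrac{s+1}{s^2+s+1}$. For monotonicity I would differentiate on $I_s$ and clear the positive denominator: $F'(m)<0$ is equivalent to $\Delta'(m)\,m-2\Delta(m)<2(s^2-s-1)\sqrt{\Delta(m)}$, and the left-hand side simplifies to $-4(s-1)(s^2+s+1)m-2(s^2-s-1)^2$, which is negative, whereas the right-hand side is $\geq0$ for $s\geq2$. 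Hence $F$ is strictly decreasing on each $I_s$ with $s\geq2$, and with continuity at the breakpoints this gives (b).

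For (c), fix $m\in I_s$, so that $\lfloor\sqrt m\rfloor\in\{s-1,s\}$. I would first determine which of the two expressions realises the minimum in Theorem~\ref{T:HKTBound}: if $\lfloor\sqrt m\rfloor=s-1$ (forcing $s^2-s+2\leq m\leq s^2-1$) it is $\tfrac sm$; if $\lfloor\sqrt m\rfloor=s$ and $m\leq s^2+s$ it is $\tfrac1s$; and if $m=s^2+s+1$ it is $\tfrac{s+1}m$. Next, $F(m)>\tfrac sm$ for every $m\in I_s$: writing this as $\sqrt{\Delta(m)}>R(m)$ with $R(m)=2s(s^2+s-1)-A(m)$ linear in $m$, one checks that $\Delta(m)-R(m)^2$ is a concave quadratic with roots exactly at $m=s^2-s+1$ and $m=s^2+s$, hence nonnegative on $[s^2-s+1,s^2+s]$, and a sign check on $R$ (which is negative at $m=s^2+s$ and at $m=s^2+s+1$) makes the inequality strict on all of $I_s$. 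Similarly $F(m)\geq\tfrac1s$ for $m\leq s^2+s$ with equality only at $m=s^2+s$ — alternatively this is immediate from (a) and (b), since $F$ is decreasing with $F(s^2+s)=\tfrac1s$ — while $F(s^2+s+1)=\tfrac{s+1}{s^2+s+1}$ by (a). Combining these three facts with the three cases above shows that $F(m)$ is at least the bound of Theorem~\ref{T:HKTBound}, with equality precisely when $m\in\{s^2+s,s^2+s+1\}$.

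The step I expect to be hardest is the bookkeeping in part (c): one must correctly identify the smaller of the two expressions in the minimum across each subrange of $m$, and confirm the algebraic coincidences that make the bounds line up — the radicand being a perfect square at $m=s^2-s+1,\ s^2+s,\ s^2+s+1$, and $\Delta(m)-R(m)^2$ having its two roots exactly at $m=s^2-s+1$ and $m=s^2+s$. The derivative identity in part (b) is the other computational pressure point, though it collapses cleanly once the left-hand side is expanded. Throughout, the small cases $m\in\{2,3\}$, where the radicand degenerates, are settled by direct inspection.
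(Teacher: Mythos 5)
Your proposal is correct for $s\geq 2$ and follows essentially the same route as the paper: direct substitution for (a), a sign analysis of the derivative on each interval $(s^2-s+1,s^2+s+1]$ together with continuity at the breakpoints for (b), and, for (c), the same case split on which term realises the minimum in Theorem~\ref{T:HKTBound} followed by the reduction of $F(m)>\frac{s}{m}$ to a concave quadratic in $m$ with roots $s^2-s+1$ and $s^2+s$ (your computation in fact corrects a typo in the paper's displayed quadratic, whose constant term should be $s(s^3+1)$ rather than $4s(s^3+1)$). The one place to be careful is the degenerate case $s=1$: the hand-check you defer to actually gives $F(3)=\frac{1+5-1+\sqrt{1}}{6}=1$, not $\frac{s+1}{s^2+s+1}=\frac{2}{3}$, because $\sqrt{(s^2-s-1)^2}=|s^2-s-1|=1\neq s^2-s-1$ when $s=1$; this is a defect in the statement of part (a) itself (the paper elsewhere asserts $F(3)=1$, and part (c) inherits the same issue at $m=3$), so you should restrict (a) to $s\geq 2$ or record the discrepancy rather than claim the verification succeeds there.
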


\begin{proof}
Part (a) follows by making the appropriate substitutions and simplifying. For each integer $s \geq 2$, $F$ is  continuous on the interval $(s^2-s+1,s^2+s+1)$ with derivative
\[\mfrac{-2m(s^3-1)-(s^2-s-1)\bigl(s^2-s-1 + \sqrt{(s^2-s-1)^2 + 4m(s-1)(s^2+s+1-m)}\,\bigr)}
{2m(s^2+s+1)\sqrt{(s^2-s-1)^2+4m(s-1)(s^2+s+1-m)}}\]
which is negative. Thus, (b) follows by observing that, for each positive integer $s$, $F$ is also continuous at $s^2+s+1$. For (c), let $m \geq 3$ be an integer and let $s$ be the positive integer such that $s^2-s+1 < m \leq s^2+s+1$. The claims of equality follow from (a). If $s^2 \leq m < s^2+s$, then the bound of Theorem~\ref{T:HKTBound} is $\frac{1}{s}$ and $F(m) > F(s^2+s)=\frac{1}{s}$ using (a) and (b). If $s^2-s+1 < m < s^2$, then the bound of Theorem~\ref{T:HKTBound} is $\frac{s}{m}$ and simplification shows that $F(m)>\frac{s}{m}$ if and only if the radical in $F(m)$ is greater than $s(2s^2-2m+s-1)+1$. Squaring both sides of this inequality, simplifying and cancelling a common factor of $4(s^2+s-1)$ we see that in turn this holds if and only if
\begin{equation}\label{E:comparisonQuad}
-m^2 + (2s^2+1)m-4s(s^3+1) > 0.
\end{equation}
The left hand side of \eqref{E:comparisonQuad} is a concave quadratic function of $m$ with roots at $s^2-s+1$ and $s^2+s$. Thus it is positive for all $m$ satisfying $s^2-s+1 < m < s^2$ and we have the desired result.
\end{proof}

Figure~\ref{F:plot} gives a visual comparison of the bounds of Theorems~\ref{T:HKTBound} and \ref{T:newBound} for small values of $m$.

\begin{figure}[htb]
\begin{center}
\includegraphics[width=0.7\textwidth]{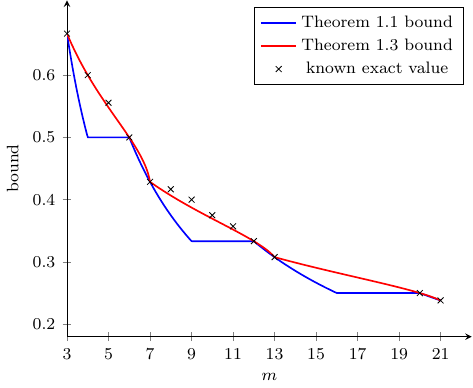}
\end{center}
\caption{A comparison of the bounds of Theorems~\ref{T:HKTBound} and \ref{T:newBound} and, where applicable, known exact values for $3 \leq m \leq 21$. For visual clarity we treat each of the two bounds as a function of a real variable $m$. Note that neither axis starts at 0.}\label{F:plot}
\end{figure}

We prove the main part of Theorem~\ref{T:newBound} in the following lemma.

\begin{Lemma}\label{L:newBound}
For each integer $m \geq 2$ we have $L(m) \geq F(m)$.
\end{Lemma}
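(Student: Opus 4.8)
The plan is to take an arbitrary covering design realising $L(m)$, reduce it to a clean form, and then extract the bound from Lemmas~\ref{L:avWeightBound} and~\ref{L:maxWeight}, crucially exploiting the \emph{integrality} of the replication numbers. Using Lemma~\ref{L:equiDef}, Lemma~\ref{L:LVsFracIndepSet}(b) and Remark~\ref{R:removing}, I would first reduce to the case where $D=(X,\B)$ is a covering design with $|\B|=m$ and no duplicated points, and $w$ is a positive normalised weighting of $X$ with $\ell:=L(D,w)=L(m)$. The cases $m\leq 3$ are immediate from the values found in Section~\ref{S:problem}, so assume $m\geq 4$; then $\ell\leq L(3)<1$. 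Write $s$ for the positive integer with $s^2-s+1<m\leq s^2+s+1$. Two elementary facts will be used throughout: $w(x)\leq w(B)\leq\ell$ whenever $x\in B$, so in fact $w(x)<\ell$ for every $x$; and, by Lemma~\ref{L:maxWeight}, each $r_x\geq 2$ and $w(x)\leq\frac{r_x\ell-1}{r_x-1}$, which rearranges to $r_x\geq\frac{1-w(x)}{\ell-w(x)}$ (so $r_x>1/\ell$). Together with $\sum_{x\in X}w(x)=1$ and Lemma~\ref{L:avWeightBound}, which gives $m\ell\geq\sum_{x\in X}r_xw(x)$, these are the ingredients.

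Next I would reformulate the goal. A short computation (isolating the radical and squaring) shows that $F(m)$ is the larger root of the quadratic
\[ q(t)=m(s^2+s-1)t^2-(s^2+(2m-1)s-1)t+(m-1), \]
and a routine case check across the subranges $s^2-s+1<m\leq s^2-1$, $s^2\leq m\leq s^2+s$ and $m=s^2+s+1$ shows that the smaller root of $q$ lies strictly below the bound of Theorem~\ref{T:HKTBound}. Since $\ell=L(m)$ is at least that bound, it therefore suffices to prove $q(\ell)\geq 0$, which after rearrangement is equivalent to
\[ (m-1)\bigl((s^2+s-1)\ell^2-2s\ell+1\bigr)\geq(s^2+s-1)\ell(1-\ell). \]
Thus everything comes down to establishing this inequality for $D$ and $w$.

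To prove it I would eliminate the weights. Setting $\phi(u)=\frac{1-u}{\ell-u}$, the bound on $w(x)$ gives $r_x\geq\lceil\phi(w(x))\rceil$, hence $m\ell\geq\sum_{x}\lceil\phi(w(x))\rceil\,w(x)=\sum_x h(w(x))$ where $h(u)=\lceil\phi(u)\rceil\,u$. The function $h$ is increasing and piecewise linear on $[0,\ell)$, with corners at the points $\bigl(g(k),kg(k)\bigr)$ for $g(k)=\frac{k\ell-1}{k-1}$, $k\geq 2$; passing to the lower convex envelope $\widehat{h}$ of $h$ and applying Jensen to the $n:=|X|\leq m$ weights $w(x)$, which sum to $1$, yields $m\ell\geq n\,\widehat{h}(1/n)$. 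Optimising over $n\leq m$, and using Lemma~\ref{L:affinePlanesEtc} (which gives $L(m)>1/s$ when $m<s^2+s$, so that $1/n$ falls on the linear piece of $\widehat{h}$ spanned by the corners $k=s$ and $k=s+1$), this expands to exactly $q(\ell)\geq 0$. For the endpoint values $m\in\{s^2+s,s^2+s+1\}$ one can instead argue directly: by Lemma~\ref{L:projPlanesEtc} or Lemma~\ref{L:affinePlanesEtc} (via Theorems~\ref{T:Furedi} and~\ref{T:BieMarPam}), any covering design with $m$ blocks attaining a value below $F(m)$ would have to be, or contain, a projective, affine or almost projective plane of order $s$, whose data limit is exactly $F(m)$ --- a contradiction.

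The hard part is this last step. A plain convexity argument --- replacing $\lceil\phi(u)\rceil$ by $\phi(u)$, so that $\sum_x r_xw(x)\geq\sum_x\psi(w(x))$ with $\psi(u)=\frac{u(1-u)}{\ell-u}$ convex on $[0,\ell)$ --- gives only the weaker bound $\ell\geq\frac{1+\sqrt{4m-3}}{2m}$, which is tight precisely when $m=s^2+s+1$ (the projective plane case, in which all replication numbers and all point weights coincide). Obtaining the sharper bound genuinely requires the integrality of the $r_x$: carrying the ceiling function through the convex envelope and the optimisation over $n$, and verifying that the resulting discrete optimum equals $F(m)$ for every $m$ in the range, is the delicate calculation. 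It is also here that the exceptional value $m=4$ appears, as the one case where the bound is met by a covering design --- the one in Example~\ref{X:order4} --- that is not a plane.
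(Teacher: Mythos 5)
Your strategy is in substance the same as the paper's: reduce to a covering design with $m$ blocks and at most $m$ points, turn Lemma~\ref{L:maxWeight} into a lower bound on each replication number, exploit the integrality of the $r_x$ together with $\ell>\frac{1}{s}$ (from Lemma~\ref{L:affinePlanesEtc}) and $\ell<\frac{1}{s-1}$, feed the result into the averaging bound of Lemma~\ref{L:avWeightBound}, and identify $F(m)$ as the larger root of the quadratic $q(t)=m(s^2+s-1)t^2-(s^2+(2m-1)s-1)t+(m-1)$, which is exactly the paper's \eqref{E:ellQuadratic}. Your Jensen/convex-envelope formulation is the geometric dual of what the paper does explicitly: the paper groups points by replication number, records the two constraints \eqref{E:unitWeight} and \eqref{E:numVerts}, and certifies the optimum of the resulting linear program by the combination $\alpha\cdot\eqref{E:unitWeight}-\beta\cdot\eqref{E:numVerts}$ with $\alpha,\beta$ chosen so that the coefficients of $y_s$ and $y_{s+1}$ are $s$ and $s+1$; the segment of your envelope $\widehat{h}$ between the corners $k=s$ and $k=s+1$ is precisely that dual certificate.

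The gap is that the decisive computation is asserted rather than carried out, and the one concrete claim you do make about it is not justified and can fail as stated. You need $1/n$ (with $n=|X|\le m$) to land on the segment of $\widehat{h}$ joining $(g(s),sg(s))$ to $(g(s+1),(s+1)g(s+1))$, but whether it does depends on $\ell$: the first piece of $\widehat{h}$ is the line $u\mapsto su$ on $[0,g(s)]$, and $1/n\le g(s)$ exactly when $\ell\ge\frac{n+s-1}{sn}$, in which case Jensen returns only $\ell\ge s/n$, i.e.\ the bound of Theorem~\ref{T:HKTBound} that you are trying to improve. This case is rescuable --- one checks that $\frac{m+s-1}{sm}\ge F(m)$ throughout the range, so the hypothesis of being in that case already implies the conclusion --- but that verification, along with the convexity of the corner sequence $\bigl(g(k),kg(k)\bigr)$ and the evaluation of the chord through corners $s$ and $s+1$ at $u=1/m$, is exactly the content you defer as ``the delicate calculation.'' The paper avoids all of this case analysis because its explicit multipliers $\alpha=\frac{(s^2-s-1)\ell+1}{1-\ell}$ and $\beta=\frac{(s\ell-1)(s\ell+\ell-1)}{1-\ell}$ produce coefficients $i-c_i$ with $c_i\ge 0$ for \emph{every} $i$, so the resulting lower bound on $\sum_i iy_i$ is valid for every distribution of replication numbers at once and \eqref{E:ellQuadratic} drops out by direct simplification. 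If you complete your deferred calculation your argument closes, but the cleanest way to do so is the paper's linear combination; as written, the proposal stops short of the step that actually proves the inequality.
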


\begin{proof}
Let $s$ be the positive integer such that $s^2-s+1< m \leq s^2+s+1$. For brevity, let $\ell=L(m)$. By Lemma~\ref{L:equiDef} there is a covering design $D=(X,\B)$ with $|X|=|\B|=m$ and a normalised weighting of $X$ such that $L(D,w)=\ell$. If $m = 2$, then $\ell=F(m)=1$. If $m=3$, then $\ell=F(m)=\frac{2}{3}$ by Theorem~\ref{T:HKTBound}. So we may assume that $m \geq 4$ and hence that $s \geq 2$. If $m \in \{s^2+s,s^2+s+1\}$, then the result follows by Theorem~\ref{T:HKTBound} and Lemma~\ref{L:boundFacts}(a). So we may assume that $m < s^2+s$ and hence, by the contrapositive of the first part of Lemma~\ref{L:affinePlanesEtc}, that $\ell>\frac{1}{s}$. By Lemma~\ref{L:boundFacts}(a) and (b) we have that $F(m) < F(s^2-s) =\frac{1}{s-1}$. So we may suppose that $\ell<\frac{1}{s-1}$ for otherwise we are done. Thus we have $r_x \geq s$ for each $x \in X$, for otherwise we would have a contradiction to $w(x) \geq 0$ using Lemma~\ref{L:maxWeight}.

For each $i \in \{s,\ldots,m\}$, let $X_i=\{x \in X:r_x=i\}$ and $y_i=\sum_{x \in X_i}w(x)$. Note that $\{X_s,\ldots,X_m\}$ is a partition of $X$. Since $w$ is normalised we have
\begin{equation}\label{E:unitWeight}
    \sum_{i=s}^m y_i = 1.
\end{equation}
By Lemma~\ref{L:maxWeight}, for each $i \in \{s,\ldots,m\}$, we have $y_i \leq \frac{i\ell-1}{i-1}|X_i|$ and hence $\frac{i-1}{i\ell-1}\,y_i \leq |X_i|$. Thus, because $\sum_{i=s}^m|X_i|=m$, we have
\begin{equation}\label{E:numVerts}
    \sum_{i=s}^m \mfrac{i-1}{i\ell-1}\,y_i \leq m.
\end{equation}
Furthermore, by Lemma~\ref{L:avWeightBound},
\begin{equation}\label{E:totalWeight}
    m\ell \geq \sum_{i=s}^m iy_i.
\end{equation}

We will now effectively bound $\sum_{i=s}^m iy_i$ below by the optimal value of the linear program that minimises $\sum_{i=s}^m iy_i$ subject to the constraints \eqref{E:unitWeight} and \eqref{E:numVerts}. Take $\alpha$ times equality \eqref{E:unitWeight} and subtract $\beta$ times inequality \eqref{E:numVerts} where $\alpha=\frac{(s^2-s-1)\ell+1}{1-\ell}$ and $\beta=\frac{(s\ell-1)(s\ell+\ell-1)}{1-\ell}$ are positive. After simplification, this yields
\begin{equation}\label{E:linComb}
\sum_{i=s}^m (i-c_i)y_i \geq \mfrac{(s^2+2ms+m-s-1)\ell-m(s^2+s)\ell^2-m+1}{1-\ell}\text{~~~where~~~} c_i=\mfrac{(i-s)(i-s-1)\ell}{i\ell-1}.
\end{equation}
Note that our choice of $\alpha$ and $\beta$ was the unique one under which the resulting inequality would have $s$ as the coefficient of $y_s$ and $s+1$ as the coefficient of $y_{s+1}$. Clearly, $c_i \geq 0$ for each $i \in \{s,\ldots,m\}$ and so from \eqref{E:totalWeight} and \eqref{E:linComb} we can deduce that $m\ell$ is at least the right hand side of \eqref{E:linComb}. The resulting inequality simplifies to
\begin{equation}\label{E:ellQuadratic}
m(s^2+s-1)\ell^2-(s^2+2ms-s-1)\ell + m-1 \geq 0
\end{equation}
which we can view as a quadratic function of $\ell$ with two real roots. We saw earlier that $\ell > \frac{1}{s}$ and we now observe that substituting $\ell=\frac{1}{s}$ into \eqref{E:ellQuadratic}
yields
\[\mfrac{m(s^2+s-1)}{s^2} - \mfrac{(s^2 +2sm -s-1)}{s} +m-1 = \mfrac{(s-1)(m-s^2-s)}{s^2}\]
which is nonpositive since $m \leq s^2+s$. Thus, because $\ell > \frac{1}{s}$, we have that $\ell$ must be at least the larger of two roots of \eqref{E:ellQuadratic}. This larger root is exactly $F(m)$.
\end{proof}

We now proceed to prove the rest of Theorem~\ref{T:newBound} by re-examining the proof of Lemma~\ref{L:newBound}, analysing how equality in the bound could arise. A \emph{near pencil of order $m$} is a linear space with $m$ points and $m$ blocks such that one block has size $m-1$ and every other block has size $2$.

\begin{Lemma}\label{L:newBoundEquality}
For each integer $m \geq 2$, if $L(m)=F(m)$, then $m=4$ or $m \in \{s^2+s,s^2+s+1\}$ for some positive integer $s$.
\end{Lemma}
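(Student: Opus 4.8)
The plan is to revisit the proof of Lemma~\ref{L:newBound} and track exactly which inequalities must be tight when $L(m)=F(m)$. Working with the covering design $D=(X,\B)$ and positive normalised weighting $w$ guaranteed by Lemma~\ref{L:equiDef} (after applying Remark~\ref{R:removing} to assume $w$ is positive and $D$ has no duplicated points), and writing $\ell=L(D,w)=F(m)$, we already know from the earlier proof that $\frac{1}{s}<\ell<\frac{1}{s-1}$ (the case $m \in \{s^2+s,s^2+s+1\}$ being among the allowed conclusions, and $m \in \{2,3\}$ forcing $s\le 2$), and that $r_x \geq s$ for each $x \in X$. First I would observe that equality in the final quadratic \eqref{E:ellQuadratic} forces equality in \eqref{E:totalWeight}, in \eqref{E:linComb}, and in \eqref{E:numVerts}. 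Equality in \eqref{E:linComb} (given $c_i\ge 0$ and the $y_i\ge 0$) forces $y_i=0$ for every $i$ with $c_i>0$, i.e.\ for every $i \notin \{s,s+1\}$ — so every point has replication number $s$ or $s+1$, that is, $X=X_s \cup X_{s+1}$. Equality in \eqref{E:totalWeight} (Lemma~\ref{L:avWeightBound}) forces $w(B)=\ell$ for every block $B$. Equality in \eqref{E:numVerts} forces equality in the per-class bound $y_i \leq \frac{i\ell-1}{i-1}|X_i|$ for $i\in\{s,s+1\}$, hence $w(x)=\frac{r_x\ell-1}{r_x-1}$ for every $x\in X$; since $w$ is positive, the equality clause of Lemma~\ref{L:maxWeight} then says that for each $x$, every block through $x$ has weight $\ell$ (already known) and every other point lies in exactly one block through $x$. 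This last condition means that any two points of $D$ lie together in exactly one block: $D$ is a \emph{linear space} on $m$ points with $m$ blocks, in which each block has weight exactly $\ell$ and each point has degree $s$ or $s+1$.

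Next I would use the structure theory of linear spaces with equally many points and blocks. It is classical (the de~Bruijn–Erd\H{o}s theorem, or rather its refinement) that a linear space with $m$ points and exactly $m$ blocks, none of which is all of $X$, is either a projective plane (so $m=s'^2+s'+1$), a near pencil of order $m$, or — when we allow degenerate cases — very restricted. I would handle the near pencil separately: in a near pencil of order $m$ one block has size $m-1$ and carries weight $\ell$, so the point off that block has weight $\ell$; the remaining $m-1$ points carry weight $1-\ell$; but each of those points has degree $2$ (it lies on the big block and on one $2$-block), and $\frac{r_x\ell-1}{r_x-1}=2\ell-1$, giving $(m-1)(2\ell-1)=1-\ell$, i.e.\ $\ell=\frac{m}{2m-1}$. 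Checking when $F(m)=\frac{m}{2m-1}$: substituting $\ell=\frac{m}{2m-1}$ into \eqref{E:ellQuadratic} and simplifying should show this forces a specific small $m$; combined with $F(4)=\frac{3}{5}=\frac{4}{7}$? — no, $\frac{3}{5}\ne\frac47$, so in fact the near pencil of order $4$ gives $\ell=\frac{4}{7}$, which is \emph{not} $F(4)=\frac35$, so near pencils are ruled out for all $m\ge 4$ and I should double-check the small cases $m\in\{2,3\}$ directly (where they coincide with the excluded values anyway). That leaves the case that $D$ is a genuine projective plane of some order $s'$, forcing $m=s'^2+s'+1$; then every point has degree $s'+1$, and since degrees are $s$ or $s+1$ we get $s'\in\{s-1,s\}$. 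If $s'=s$ then $m=s^2+s+1$, an allowed conclusion. If $s'=s-1$ then $m=s^2-s+1$, which contradicts $m>s^2-s+1$ — unless $s^2-s+1$ actually falls in the range, i.e.\ never, so this case is impossible. Hence the only surviving possibility beyond $m\in\{s^2+s,s^2+s+1\}$ is... wait — I have apparently not produced $m=4$.

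The resolution of this apparent gap — and the point where I expect the real work to lie — is that the classification of linear spaces on $m$ points with $m$ blocks must be applied with care about \emph{degenerate} linear spaces: a block of size $2$, or blocks equal to $X$, or the possibility that some block is contained in another were all excluded only under our standing hypotheses (no duplicated points, $w$ positive, $L(D,w)<1$). The honest statement of the de~Bruijn–Erd\H{o}s classification is that a linear space with $n$ points has at least $n$ blocks, with equality iff it is a near pencil or a projective plane; but here "near pencil'' in the fully degenerate sense includes, for $n=4$, the configuration with blocks $\{1,2\},\{1,3\},\{1,4\},\{2,3,4\}$ — which is exactly Example~\ref{X:order4}, has $L=\frac35=F(4)$, and is \emph{not} the near pencil of order $4$ as I defined it above (that one has blocks $\{1,2,3\},\{1,4\},\{2,4\},\{3,4\}$). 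So the careful case analysis of which degenerate linear spaces on $m$ points with $m$ blocks can carry a positive weighting with all blocks of equal weight $\ell=F(m)$ and all degrees in $\{s,s+1\}$ must be carried out explicitly; the combinatorics forces $m$ small (since a linear space with all blocks of size $\ge 2$ and a block of size $m-1$ forces $s=2$, giving degrees in $\{2,3\}$ and, after solving the resulting weight equations, exactly $m=4$). I would therefore structure the proof as: (i) derive the "linear space, equal block weights, degrees in $\{s,s+1\}$'' conditions as above; (ii) invoke de~Bruijn–Erd\H{o}s to split into the projective-plane case ($m=s^2+s+1$, done) and the near-pencil-type case; (iii) in the near-pencil-type case, show the large block has size $m-1$, deduce $s=2$ from the degree constraint, and solve the linear system for the weights to conclude $m=4$. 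The main obstacle is step (iii): being precise about the degenerate linear spaces and checking that the weight/degree constraints admit a solution only for $m=4$ — essentially verifying that Example~\ref{X:order4} is the unique sporadic equality case.
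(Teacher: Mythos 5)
Your overall strategy is exactly the paper's: force equality in \eqref{E:totalWeight}, \eqref{E:linComb} and \eqref{E:numVerts}, deduce that all replication numbers lie in $\{s,s+1\}$, that all blocks have weight $\ell$ and that equality holds in \eqref{E:vertWeightBound} for every point, conclude via Lemma~\ref{L:maxWeight} that $D$ is a linear space with $m$ points and $m$ blocks, and finish with De Bruijn--Erd\H{o}s. However, your near-pencil analysis contains a concrete error that derails the middle of the argument. In a near pencil the block $B_0$ of size $m-1$ has weight $w(B_0)=\ell$, so the $m-1$ points \emph{on} $B_0$ carry total weight $\ell$ and the single point \emph{off} $B_0$ carries weight $1-\ell$ --- you have these two swapped. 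The correct equation is $(m-1)(2\ell-1)=\ell$, i.e.\ $\ell=\frac{m-1}{2m-3}$, which for $m=4$ gives $\frac{3}{5}=F(4)$, not the $\frac{4}{7}$ you obtained from $(m-1)(2\ell-1)=1-\ell$. Your conclusion that ``near pencils are ruled out for all $m\ge 4$'' is therefore false, and the subsequent discussion built on it is based on a false premise: the design of Example~\ref{X:order4}, with blocks $\{1,2\},\{1,3\},\{1,4\},\{2,3,4\}$, \emph{is} a near pencil of order $4$ in the paper's sense (it is isomorphic to your $\{1,2,3\},\{1,4\},\{2,4\},\{3,4\}$ via the relabelling $1\leftrightarrow 4$), so there is no hidden class of ``degenerate linear spaces'' to worry about and no extra case analysis is needed.

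Once the arithmetic is corrected, the near-pencil case closes immediately and more simply than by solving weight equations: in a near pencil one point has replication number $m-1$ and all others have replication number $2$; since every replication number must lie in $\{s,s+1\}$, this forces $s=2$ and $m-1\in\{2,3\}$, hence $m=4$ (the possibility $s=1$ lands in the excluded values $m\in\{2,3\}$). This is exactly how the paper finishes, and it is also the content of your step (iii) minus the unnecessary weight computation. One further caution: you invoke Remark~\ref{R:removing} at the outset to make $w$ positive, but removing points could destroy the property $|X|=m$ on which \eqref{E:numVerts} (and hence the whole equality analysis) depends. The paper instead keeps the design from Lemma~\ref{L:equiDef} with $|X|=|\B|=m$ and \emph{derives} positivity of $w$ from the equality conditions, since $w(x)=\frac{i\ell-1}{i-1}>0$ follows from $\ell>\frac{1}{s}$.
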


\begin{proof}
For brevity, let $\ell=L(m)$. Suppose that $\ell=F(m)$ and that $m \notin \{s^2+s,s^2+s+1\}$ for any positive integer $s$. We must show that $m=4$. By Lemma~\ref{L:equiDef} there is a covering design $D=(X,\B)$ with $|X|=|\B|=m$ and a normalised weighting $w$ of $X$ such that $L(D,w)=\ell$. Let $s$ be the positive integer such that $s^2-s+1 < m \leq s^2+s+1$ and note $m < s^2+s$ by our supposition. Thus $\ell = F(m) > \frac{1}{s}$ by Lemma~\ref{L:boundFacts}(a) and (b). For each $i \in \{s,\ldots,m\}$, let $X_i=\{x \in X:r_x=i\}$ and $y_i=\sum_{x \in X_i}w(x)$.

Following the proof of Lemma~\ref{L:newBound}, we have $r_x \geq s$ for each $x \in X$. Further, because $\ell=F(m)$, we must have equality in \eqref{E:ellQuadratic} and hence we must have equality in both \eqref{E:totalWeight} and \eqref{E:linComb}, and the right side of \eqref{E:totalWeight} must equal the left side of \eqref{E:linComb}. Note that, for $c_i$ as defined in \eqref{E:linComb}, we have $c_i = 0$ for $i \in \{s,s+1\}$ and $c_i>0$ for $i \in \{s+2,\ldots,m\}$. Thus equality between the right side of \eqref{E:totalWeight} and the left side of \eqref{E:linComb} gives $\sum_{i=s}^m (i-c_i)y_i=\sum_{i=s}^m iy_i$ which implies that $y_i=0$ for $i \in \{s+2,\ldots,m\}$. Since \eqref{E:linComb} is obtained from an equality by subtracting a positive multiple of inequality \eqref{E:numVerts}, equality in \eqref{E:linComb} implies equality in \eqref{E:numVerts}. This in turn implies that $y_i=\frac{i\ell-1}{i-1}|X_i|$ for each $i \in \{s,\ldots,m\}$ (note that $\frac{i\ell-1}{i-1}$ is positive since $\ell > \frac{1}{s}$). Hence $|X_i|=0$ for each $i \in \{s+2,\ldots,m\}$ and $w(x)=\frac{i\ell-1}{i-1}$ for all $x \in X_i$ and $i \in \{s,s+1\}$. It follows that $w$ is positive and we have equality in \eqref{E:vertWeightBound} for each $x \in X$. So, by applying Lemma~\ref{L:maxWeight} to each $x \in X$, we see that any two points in $X$ occur together in exactly one block and so $(X,\B)$ is a linear space with $|X|=|\B|=m$. The De Bruijn–Erd\H{o}s theorem in incidence geometry \cite{DebErd} states that such a space must be either a finite projective plane or a near pencil.

Now $D$ cannot be a finite projective plane because $m \neq s^2+s+1$, and hence $D$ is a near-pencil. So $r_y=m-1$ for some $y \in X$ and $r_x=2$ for each $x \in X \setminus \{y\}$. However $r_x \in \{s,s+1\}$ for all $x \in X$ because we have seen that $|X_i|=0$ for each $i \in \{s+2,\ldots,m\}$. So it must be the case that $s=2$ and $m=4$.
\end{proof}

Recall that we saw in Example~\ref{X:order4} that we do indeed have $L(4)=F(4)=\frac{3}{5}$ and that this is achieved by an appropriately weighted near pencil of order 4.

\begin{proof}[\textup{\textbf{Proof of Theorem~\ref{T:newBound}}}]
This follows from Lemmas~\ref{L:newBound} and \ref{L:newBoundEquality}.
\end{proof}

\section{Data limits for certain classes of covering designs}\label{S:designClasses}

In addition to finding bounds on $L(m)$, we believe it is also of interest to find bounds on $L(D)$ for various classes of covering designs. We investigate such bounds in this section. Our first result gives a general method for finding a lower bound on $L(D)$ by choosing a certain weighting of the blocks of $D$.

\begin{Lemma}\label{L:fracTransBound}
Let $D=(X,\B)$ be a covering design and $h$ be a weighting of $\B$ such that $\sum_{B \in \B_x}h(B) \geq 1$ for each $x \in X$. Then
\[L(D) \geq \frac{1}{\sum_{B \in \B}h(B)}.\]
\end{Lemma}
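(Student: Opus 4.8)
The statement is the natural "LP duality" companion to Lemma~\ref{L:avWeightBound} and Lemma~\ref{L:LVsFracIndepSet}: weighting the blocks so that every point is covered with total weight at least 1 is exactly a fractional covering of the dual hypergraph, and its total weight bounds $L(D)=1/\nu^*(H)$ from below. The plan is to give a short direct argument that does not even invoke duality explicitly. First I would fix any normalised weighting $w$ of $X$; the goal is to show $L(D,w)\sum_{B\in\B}h(B)\geq 1$, which upon taking the infimum over $w$ yields the claim. The key step is the double-counting identity
\[
\sum_{B\in\B}h(B)\,w(B)=\sum_{B\in\B}h(B)\sum_{x\in B}w(x)=\sum_{x\in X}w(x)\sum_{B\in\B_x}h(B).
\]
Using the hypothesis $\sum_{B\in\B_x}h(B)\geq 1$ for each $x$, together with $w(x)\geq 0$, the right-hand side is at least $\sum_{x\in X}w(x)=1$. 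On the other hand, since $h(B)\geq 0$ and $w(B)\leq L(D,w)$ for every $B\in\B$, the left-hand side is at most $L(D,w)\sum_{B\in\B}h(B)$. Combining the two bounds gives $L(D,w)\sum_{B\in\B}h(B)\geq 1$.

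It remains to handle the degenerate possibility $\sum_{B\in\B}h(B)=0$: then every $h(B)=0$, contradicting $\sum_{B\in\B_x}h(B)\geq 1$ (as long as $X\neq\emptyset$, which holds for any covering design), so the denominator is strictly positive and the division is legitimate. Taking the infimum of $L(D,w)$ over all normalised weightings $w$ then gives $L(D)\geq 1/\sum_{B\in\B}h(B)$, as desired.

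There is essentially no obstacle here — the argument is a one-line double count plus the trivial bounds — so the only thing to be careful about is the bookkeeping on nonnegativity (both of $w$ and of $h$) and the non-vanishing of the denominator. If one prefers a slicker phrasing, the same fact is immediate from Lemma~\ref{L:LVsFracIndepSet}(a): $h$ restricted to $\B$ is (after no rescaling needed) a fractional cover of the dual hypergraph $H$ of $D$, so by LP duality $\sum_{B\in\B}h(B)\geq\tau^*(H)=\nu^*(H)=1/L(D)$; but the self-contained double-counting proof is shorter and avoids citing strong duality.
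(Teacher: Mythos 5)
Your proof is correct and follows essentially the same double-counting argument as the paper: both bound $\sum_{B \in \B} h(B)\,w(B)$ above by $L(D,w)\sum_{B\in\B}h(B)$ and below by $\sum_{x\in X}w(x)=1$. The only cosmetic difference is that you take an infimum over all normalised weightings while the paper fixes an optimal $w$; your closing remark about LP duality likewise matches the paper's own comment immediately after the lemma.
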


\begin{proof}
Let $w$ be a weighting of $X$ such that $L(D,w)=L(D)$. We have
\[L(D)\sum_{B \in \B}h(B) \geq \sum_{B \in \B}\biggl(h(B)\sum_{x \in B}w(x)\biggr) \geq  \sum_{x \in X}w(x) = 1\]
where the first inequality follows because $\sum_{x \in B}w(x) \leq L(D)$ for each $B \in \B$ and the second follows because $\sum_{B \in B_x}h(B) \geq 1$ for each $x \in X$.
\end{proof}

We note that the weighting $h$ in Lemma~\ref{L:fracTransBound} induces a \emph{fractional transversal of value $\sum_{B \in \B}h(B)$} in the hypergraph $H$ which is the dual of $D$ (see \cite{Fur} for definitions and context). The minimum value of a transversal in $H$ is $\nu^*(H)$ because the linear program for finding this minimum is the dual of the linear program for finding the maximum size of a fractional matching in $H$. Combining this observation with Lemma~\ref{L:LVsFracIndepSet} gives an alternative proof of Lemma~\ref{L:fracTransBound}. We observe the following useful corollary of Lemma~\ref{L:fracTransBound}.

\begin{Corollary}\label{C:fracTransCor}
Let $D=(X,\B)$ be a covering design. If there is a submultiset $\B'$ of $\B$ and a positive integer $t$ such that each point in $X$ occurs in at least $t$ blocks in $\B'$, then $L(D) \geq t/|\B'|$.
\end{Corollary}

\begin{proof}
Apply Lemma~\ref{L:fracTransBound} with $h(B)=\frac{1}{t}$ if $B \in \B'$ and $h(B)=0$ otherwise.
\end{proof}

We can use Corollary~\ref{C:fracTransCor} to exactly determine the data limit for coverings that are both uniform and regular and to give some general bounds relating to the replication numbers and block sizes of a covering design.

\begin{Lemma}\label{L:rkBounds}
If $D=(X,\B)$ is a covering design, then
\[\frac{\min\{r_x:x\in X\}}{|\B|} \leq L(D) \leq \frac{\max\{|B|:B \in \B\}}{|X|}.\]
In particular, $L(D)=\frac{r}{|X|}$ if $r_x=r$ for all $x \in X$ and $|B|=|B'|$ for all $B,B' \in \B$.
\end{Lemma}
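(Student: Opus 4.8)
The plan is to derive the two inequalities separately: the lower bound from Corollary~\ref{C:fracTransCor}, and the upper bound from an explicit uniform weighting of the points. For the lower bound, let $r=\min\{r_x:x\in X\}$. Every point of $X$ occurs in at least $r$ blocks of $\B$, so taking $\B'=\B$ and $t=r$ in Corollary~\ref{C:fracTransCor} immediately gives $L(D) \geq r/|\B|$. For the upper bound, let $k=\max\{|B|:B\in\B\}$ and let $w$ be the normalised weighting of $X$ with $w(x)=\frac{1}{|X|}$ for each $x\in X$. Then $w(B)=\frac{|B|}{|X|}\leq\frac{k}{|X|}$ for every $B\in\B$, so $L(D)\leq L(D,w)\leq\frac{k}{|X|}$, which is the claimed upper bound.

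For the final sentence, suppose $r_x=r$ for all $x\in X$ and all blocks have a common size, say $|B|=k$ for all $B\in\B$. The lower bound just proved gives $L(D)\geq r/|\B|$, and the upper bound gives $L(D)\leq k/|X|$. By double counting incidences, $|\B|k=\sum_{B\in\B}|B|=\sum_{x\in X}r_x=|X|r$, so $r/|\B|=k/|X|$ and the two bounds coincide, forcing $L(D)=\frac{r}{|X|}$.

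There is essentially no obstacle here; the only point requiring a moment's care is making sure Corollary~\ref{C:fracTransCor} applies with the trivial choice $\B'=\B$, which it does since $t$ need only be a positive integer with every point covered at least $t$ times (and $r\geq 1$ because $D$ is a covering design, so $\min\{r_x:x\in X\}$ is a positive integer). One could alternatively note that if $X$ had an isolated point it would contradict $D$ being a covering; in any case $r\geq 1$. The identity $|\B|k=|X|r$ in the equality case is the standard incidence count and needs no further comment.
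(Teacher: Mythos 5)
Your proof is correct and follows essentially the same route as the paper's: Corollary~\ref{C:fracTransCor} with $\B'=\B$ for the lower bound, the uniform weighting $w(x)=1/|X|$ for the upper bound, and the incidence count $rv=km$ for the equality case. (One small remark: what both your argument and the paper's actually establish in the regular uniform case is $L(D)=r/|\B|=k/|X|$; the expression $r/|X|$ in the lemma statement appears to be a typo, and the paper's own proof likewise concludes $L(D)=r/|\B|$.)
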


\begin{proof}
Let $v=|X|$, $m=|\B|$, $r_{\min}=\min\{r_x:x\in X\}$ and $k_{\max}=\max\{|B|:B \in \B\}$. Applying Corollary~\ref{C:fracTransCor} with $\B'=\B$ shows that $L(m) \geq \frac{r_{\min}}{m}$. Furthermore, $L(D,w) = \frac{k_{\max}}{v}$ where $w$ is the normalised weighting of $X$ given by $w(x)=\frac{1}{v}$ for each $x \in X$. This proves the first part of the theorem.

Now suppose that, for some integers $r$ and $k$, $r_x=r$ for all $x \in X$ and $|B|=k$ for all $B \in \B$. Then $\frac{r}{m} \leq L(D) \leq \frac{k}{v}$ from the first part and $rv=km$ by counting point-block incidences. So $L(D)=\frac{r}{m}$.
\end{proof}

Note that Lemma~\ref{L:rkBounds} generalises results from \cite{HalKelTia} concerning balanced incomplete block designs. A \emph{balanced incomplete block design} is a uniform linear space. Such designs necessarily have exactly $r$ blocks containing each point for some fixed $r$ and so the second part of Lemma~\ref{L:rkBounds} applies to them. We now establish another lower bound on $L(D)$ based on the replication numbers of the points of a covering design $D$.

\begin{Lemma}\label{L:repSeqBound}
If $D = (X,\B)$ is a covering design such that $L(D)<1$ and $\sigma = \sum_{x \in X} \frac{1}{r_x-1}$, then
\[L(D) \geq \mfrac{1+\sigma}{|X|+\sigma}.\]
\end{Lemma}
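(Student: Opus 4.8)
The plan is to proceed exactly as in the proof of Lemma~\ref{L:newBound}, but keeping the exact replication numbers as data rather than grouping points into classes. Let $w$ be a normalised weighting of $X$ with $L(D,w)=L(D)=:\ell$; we may assume $\ell<1$ by hypothesis. Since $\ell<1$, Lemma~\ref{L:maxWeight} gives $r_x\geq 2$ for each $x\in X$ and the inequality
\[
w(x)\leq \mfrac{r_x\ell-1}{r_x-1}=\ell-\mfrac{1-\ell}{r_x-1}\qquad\text{for each }x\in X.
\]
Summing over all $x\in X$ and using that $w$ is normalised yields
\[
1=\sum_{x\in X}w(x)\leq |X|\,\ell-(1-\ell)\sum_{x\in X}\mfrac{1}{r_x-1}=|X|\,\ell-(1-\ell)\sigma.
\]
Rearranging $1\leq |X|\ell-(1-\ell)\sigma=(|X|+\sigma)\ell-\sigma$ gives $\ell\geq \frac{1+\sigma}{|X|+\sigma}$, which is the desired bound.

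So the argument is essentially a one-line summation of the per-vertex bound from Lemma~\ref{L:maxWeight}, and there is no real obstacle — the only point requiring a moment of care is verifying that $\frac{r_x\ell-1}{r_x-1}=\ell-\frac{1-\ell}{r_x-1}$ (clear after putting over a common denominator) and that each term $\frac{1}{r_x-1}$ is well-defined, which holds because $r_x\geq 2$. One could alternatively phrase this via Lemma~\ref{L:avWeightBound}: combining $\sum_x r_x w(x)\leq |X|\ell$ (rearranged form of Lemma~\ref{L:avWeightBound}) with $\sum_x w(x)=1$ and the pointwise bound, but the direct summation above is cleanest. Note also that the bound is tight in exactly the situations analysed in Lemma~\ref{L:newBoundEquality}, since equality here forces equality in \eqref{E:vertWeightBound} at every point, hence (by Lemma~\ref{L:maxWeight}, assuming $w$ positive) forces $D$ to be a linear space; I would expect the paper to exploit this connection, though it is not needed for the statement itself.
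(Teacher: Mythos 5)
Your proof is correct and is essentially the paper's own argument: both apply Lemma~\ref{L:maxWeight} pointwise, sum the resulting inequalities $w(x)\leq\frac{r_xL(D)-1}{r_x-1}$ over $X$, and rearrange to isolate $L(D)$; the only difference is a cosmetic choice in how the sum is simplified. The closing remark about equality forcing a linear space is a reasonable aside but is not part of the paper's proof and is not needed.
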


\begin{proof}
Note that $r_x \geq 2$ for each $x \in X$ because otherwise $X \in \B$ and $L(D)=1$. Let $w$ be a normalised weighting of $X$ such that $L(D,w)=L(D)$. Applying Lemma~\ref{L:maxWeight} to each point in $X$ and summing the resulting inequalities we see that
\[1 \leq \sum_{x \in X}\mfrac{r_xL(D)-1}{r_x-1}=L(D)\biggl(\,\sum_{x \in X}\mfrac{r_x}{r_x-1}\biggr)-\sigma=L(D)\bigl(|X|+\sigma\bigr)-\sigma.\]
Solving this inequality for $L(D)$ completes the proof.
\end{proof}

We give a small example to illustrate the application of some of the lemmas we have just proven.

\begin{Example}
Let $D=(X,\B)$ be the covering design with $X=\{1,2,3,4,5\}$ and
\begin{equation}\label{E:exBlockList}
\B=\bigl\{\{1,2\},\{1,3\},\{1,4,5\},\{2,3,4\},\{2,3,5\}\bigr\}.
\end{equation}
We have $r_1=r_2=r_3=3$ and $r_4=r_5=2$, and hence Lemma~\ref{L:rkBounds} gives $L(D) \geq \frac{2}{5}$. Lemma~\ref{L:repSeqBound} improves on this by giving $L(D) \geq \frac{9}{17}$. This can be improved still further by applying Lemma~\ref{L:fracTransBound} with block weights $\frac{1}{5},\frac{1}{5},\frac{3}{5},\frac{2}{5},\frac{2}{5}$ respectively
according to the order the blocks are listed in \eqref{E:exBlockList}. This weighting makes the sum of the block weights on each point exactly 1, and hence Lemma~\ref{L:fracTransBound} implies that $L(D) \geq \frac{5}{9}$. In fact, we have $L(D)=\frac{5}{9}$ because $L(D,w)=\frac{5}{9}$ for the weighting of $X$ given by $w(1)=\frac{1}{3}$, $w(2)=w(3)=\frac{2}{9}$ and $w(4)=w(5)=\frac{1}{9}$.
\end{Example}

As we just saw, Lemma~\ref{L:rkBounds} and Lemma~\ref{L:repSeqBound} give bounds based on natural parameters of the covering design. Lemma~\ref{L:fracTransBound} potentially yields stronger bounds, but this is reliant on finding a suitable choice of weighting for the blocks.

We noted that data limits for balanced incomplete blocks designs were considered in \cite{HalKelTia} and are covered by Lemma~\ref{L:rkBounds}. We will conclude this section by considering data limits for two other classes of covering design: transversal designs and finite projective Hjelmslev planes. Both of these classes are closely related to the class of finite projective planes.

A \emph{$(k,n)$-transversal design} is a linear space $(X,\B)$ such that $|X| = kn$ and $\B$ consists of
\begin{itemize}[itemsep=0mm]
\item $k$ blocks of size $n$ that partition $X$; and
\item $n^2$ other blocks of size $k$.
\end{itemize}
Note that a projective plane of order $n$ can be obtained from an $(n+1,n)$-transversal design by adding a fixed new point to each of the blocks of size $n$. In general a $(k,n)$-transversal design exists if and only if there are $k-2$ mutually orthogonal latin squares of order $n$ (see \cite[\S6.6]{Sti}, for example). The results we have proved so far in this section make it easy to determine the data limits of transversal deigns.

\begin{Theorem}\label{T:transversal}
If $D$ is a $(k,n)$-transversal design with $k \leq n$, then $L(D) = \frac{1}{k}$.
\end{Theorem}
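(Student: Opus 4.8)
The plan is to prove the two inequalities $L(D) \leq \frac{1}{k}$ and $L(D) \geq \frac{1}{k}$ separately, using the tools already developed in this section. Write $D = (X,\B)$, and recall from the definition that $|X| = kn$, that $\B$ contains $k$ blocks of size $n$ partitioning $X$ (call these the \emph{group blocks}), and that the remaining $n^2$ blocks have size $k$. The key structural fact I will use is that, since the group blocks partition $X$, every point of $X$ lies in exactly one group block.

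For the upper bound I would take $w$ to be the normalised weighting of $X$ given by $w(x) = \frac{1}{kn}$ for every $x \in X$. Because $k \leq n$, the largest block of $D$ has size $n$, so $L(D) \leq L(D,w) = \frac{n}{kn} = \frac{1}{k}$; this is also immediate from the upper bound in Lemma~\ref{L:rkBounds}.

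For the lower bound, I would first note that the lower bound of Lemma~\ref{L:rkBounds} is too weak here: every point has replication number $n+1$, so it yields only $L(D) \geq \frac{n+1}{k+n^2}$, which is strictly less than $\frac{1}{k}$ whenever $k < n$. Instead I would apply Corollary~\ref{C:fracTransCor} with $\B'$ equal to the submultiset of $\B$ consisting of the $k$ group blocks and with $t = 1$: since these blocks partition $X$, each point of $X$ occurs in (exactly, hence at least) one block of $\B'$, and therefore $L(D) \geq \frac{t}{|\B'|} = \frac{1}{k}$. Equivalently, one could invoke Lemma~\ref{L:fracTransBound} directly with the weighting $h$ of $\B$ that assigns weight $1$ to each group block and weight $0$ to every other block, so that $\sum_{B \in \B_x} h(B) = 1$ for each $x \in X$ and $\sum_{B \in \B} h(B) = k$.

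Combining the two bounds gives $L(D) = \frac{1}{k}$, completing the proof. There is no real obstacle here; the only step requiring a moment's thought is the choice of $\B'$ in the lower bound, and the natural and correct choice is exactly the parallel class of $k$ blocks of size $n$, after which the argument is immediate.
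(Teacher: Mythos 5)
Your proof is correct and follows essentially the same route as the paper's: the upper bound via the uniform weighting (equivalently the second inequality of Lemma~\ref{L:rkBounds}), and the lower bound by applying Corollary~\ref{C:fracTransCor} with $\B'$ the parallel class of $k$ blocks of size $n$ and $t=1$. The additional remarks (the weakness of the replication-number lower bound and the reformulation via Lemma~\ref{L:fracTransBound}) are accurate but not needed.
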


\begin{proof}
Note that $D$ is a covering design with $kn$ points and blocks of sizes $k$ and $n$. Hence $L(D) \leq \frac{n}{kn} = \frac{1}{k}$ by Lemma~\ref{L:rkBounds}. For the lower bound, we apply Corollary~\ref{C:fracTransCor} with $\B'$ taken to be a set of $k$ blocks of size $n$ in $\B$ that partition $X$. This shows that $L(D) \geq \frac{1}{k}$.
\end{proof}

A \emph{$(t,q)$-projective Hjelmslev  plane} can be defined as an intersecting covering design $(X,\B)$ such that there exists a projective plane $(P,\mathcal{L})$ of order $q$, a partition $\{X_p:p \in P\}$ of $X$ and a partition $\{\B_L:L \in \mathcal{L}\}$ of $\B$ with the following three properties.\pagebreak
\begin{itemize}[itemsep=0mm]
\item For all $L \in \mathcal{L}$ and $p \in P$, we have $|L \cap X_p|=t$ if $p \in L$ and $|L \cap X_p|=0$ otherwise.
\item Any two points in $X$ that occur together in more than one block are in the same class in $\{X_p:p \in P\}$.
\item Any two blocks in $\B$ that intersect in more than one point are in the same class in $\{\B_L:L \in \mathcal{L}\}$.
\end{itemize}
It is known that for any $(t,q)$-projective Hjelmslev plane, we have $|X|=|\B|=t^2(q^2+q+1)$, $|B|=t(q+1)$ for each $B \in \B$ and $r_x=t(q+1)$ for each $x \in X$ (see \cite{Dra}, for example). A $(1,q)$-projective Hjelmslev plane is simply a projective plane of order $q$. From Lemma~\ref{L:rkBounds}, we immediately have the following.

\begin{Theorem} \label{T:Hjelmslev}
If $D$ is a $(t,q)$-projective Hjelmslev plane, then $L(D) = \frac{q+1}{t(q^2+q+1)}$.
\end{Theorem}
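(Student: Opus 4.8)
The plan is to obtain this as an immediate application of the second (``in particular'') clause of Lemma~\ref{L:rkBounds}. First I would note that, by definition, a $(t,q)$-projective Hjelmslev plane $D=(X,\B)$ is a covering design, so Lemma~\ref{L:rkBounds} is applicable to it. I would then invoke the standard structural facts about such planes, recorded just before the statement and proved in, e.g., \cite{Dra}: every block has size $t(q+1)$, every point lies in exactly $t(q+1)$ blocks, and $|X| = t^2(q^2+q+1)$. In particular, $D$ is both uniform and regular, with common block size and common replication number $r = t(q+1)$.

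With these facts in hand, the ``in particular'' clause of Lemma~\ref{L:rkBounds} gives at once
\[
L(D) = \frac{r}{|X|} = \frac{t(q+1)}{t^2(q^2+q+1)} = \frac{q+1}{t(q^2+q+1)},
\]
which is exactly the claimed value. (Alternatively, one could split this into the upper bound $L(D) \le \max\{|B|:B\in\B\}/|X|$ coming from the uniform point-weighting and the lower bound $L(D) \ge \min\{r_x:x\in X\}/|\B|$ coming from Corollary~\ref{C:fracTransCor} applied with $\B'=\B$, and observe that these two quantities coincide because $|X|=|\B|=t^2(q^2+q+1)$ and $|B|=r_x=t(q+1)$; but this is precisely the reasoning already carried out inside Lemma~\ref{L:rkBounds}, so invoking the lemma directly is cleaner.)

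I do not anticipate any genuine obstacle here: the entire content of the theorem is packaged into the cited structural parameters of $(t,q)$-projective Hjelmslev planes together with the already-established Lemma~\ref{L:rkBounds}. The only point meriting a moment's care is a consistency check on the parameters---that the point--block incidence count $\sum_{x\in X} r_x = \sum_{B\in\B}|B|$ is respected, which holds trivially since both sides equal $t^3(q+1)(q^2+q+1)$---but this is a sanity check rather than a step of the argument.
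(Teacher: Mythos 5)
Your proof is correct and is essentially identical to the paper's, which likewise obtains the result as an immediate consequence of Lemma~\ref{L:rkBounds} using the standard parameters $|X|=|\B|=t^2(q^2+q+1)$ and $|B|=r_x=t(q+1)$. (Note that the equality $|X|=|\B|$ here makes the various forms $\frac{r}{|\B|}=\frac{k}{|X|}=\frac{r}{|X|}$ coincide, so the application of the lemma's ``in particular'' clause is unambiguous.)
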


It is not fully known for which values of $t$ and $q$ there exists a $(t,q)$-projective Hjelmslev plane, but it is known that a $(q,q)$-projective Hjelmslev plane exists whenever a projective plane of order $q$ exists \cite{HalRao,HanVan}. We briefly consider the example of a $(2,2)$-projective Hjelmslev plane.

\begin{Example}
From the immediately preceding discussion, there exists a \emph{$(2,2)$-projective Hjelmslev plane}. Such a plane is a covering design $D=(X,\B)$ such that $|\B|=|X|=28$ and $|B|=6$ for each $B \in \B$. We have $L(D) = \frac{3}{14}$ by Theorem~\ref{T:Hjelmslev}. This establishes that $L(28) \leq \frac{3}{14} \approx 0.2143$. Theorem~\ref{T:newBound} establishes $L(28) \gtrapprox 0.2095$, improving on Theorem~\ref{T:HKTBound} which gives $L(28) \geq \frac{1}{5}= 0.2$.
\end{Example}

\section{Conclusion}\label{S:conc}

Our knowledge of $L(b)$ when $b$ is the number of blocks in a finite projective or affine plane, together with the fact that $L(m)$ is nondecreasing in $m$, gives us naive lower and upper bounds on $L(m)$ for all $m$. Theorem~\ref{T:newBound} provides a substantial improvement on these naive lower bounds. However, we have no general improvement on the naive upper bound given by $L(m) \leq \frac{s+1}{s^2+s+1}$ where $s$ is the greatest prime power such that $s^2+s+1 \leq m$. Of course, known upper bounds on covering numbers provide upper bounds on $L(m)$ via Lemma~\ref{L:LVsCovNumber}, and we have discussed how balanced incomplete block designs, transversal designs, and finite projective Hjelmslev planes can provide upper bounds. However, we believe the lack of more systematic upper bounds still warrants attention.

Results on the existence of almost projective planes would impact our results in Theorem~\ref{T:projAff} and would be of independent interest. For our purposes, we could omit the mention of almost projective planes from Theorem~\ref{T:projAff}(b) if it were shown that the orders for which almost projective planes exist formed a subset of those for which projective planes exist. Given that no almost projective plane of order greater than 3 is known, this may well be true. Proving it may not be a simple matter, however.

\subsection*{Acknowledgements}

We thank Marco Buratti for helpful discussions. D. Horsley's research is supported by the Australian Research Council grant DP220102212. D.R. Stinson's research is supported by NSERC discovery grant RGPIN-03882.


\begin{thebibliography}{10}

\bibitem{BakHarPin}
 R. C. Baker, G. Harman and J. Pintz, The difference between consecutive primes. II,
\textit{Proc. London Math. Soc.}
\textbf{83} (2001) 532--562.

\bibitem{BieMarPam}
J. Bierbrauer, S. Marcugini and F. Pambianco,
Projective planes, coverings and a network problem,
\textit{Des. Codes Cryptogr.} \textbf{29} (2003), 71--89.

\bibitem{BloJunSch}
A. Blokhuis, D. Jungnickel and B. Schmidt,
On a class of symmetric divisible designs which are almost projective planes.
 In ``Finite Geometries. Developments in Mathematics, vol. 3'', Springer, Boston, 2001, pp. 27--34.

\bibitem{BosCon}
R.C. Bose and W.S. Connor,
Combinatorial properties of group divisible incomplete block designs.
\emph{Ann. Math. Stat.} {\bf 23} (1952), 367--383.

\bibitem{BruRys}
R.H. Bruck and H.J. Ryser, The nonexistence of certain finite projective planes,
\textit{Canad. J. Math. } \textbf{1} (1949), 88--93.

\bibitem{DebErd}
N.G. De Bruijn and P. Erd\H{o}s, On a combinatorial problem,
\textit{Nederl. Akad. Wetensch., Proc.} \textbf{51} (1948), 1277--1279.

\bibitem{Dra}
D.A. Drake, On $n$-uniform Hjelmslev planes,
\textit{J. Combinatorial Theory} \textbf{9} (1970), 267--288.


\bibitem{Fur}
Z. F{\"u}redi,  Maximum degree and fractional matchings in uniform hypergraphs,
\textit{Combinatorica}
\textbf{1} (1981), 155--162.

\bibitem{GorSti}
D.M. Gordon and D.R. Stinson, Coverings,
in \emph{The CRC Handbook of Combinatorial Designs, 2nd Edition} (Eds. C.J. Colbourn, J.H. Dinitz),
CRC Press (2007), 365--373.

\bibitem{HalKelTia}
J.L. Hall, W.A. Kelly and Y-C. Tian,  Optimal data distribution for big-data all-to-all comparison using finite projective and affine planes, arXiv:2308.15000.

\bibitem{HalRao}
J.L. Hall and A. Rao, An algorithm for constructing Hjelmslev planes,
in \textit{Algebraic Design Theory and Hadamard Matrices} (Ed. C.J. Colbourn), Springer (2015), 137--147.

\bibitem{HanVan}
G. Hanssens and H. Van Maldeghem, A universal construction for projective Hjelmslev planes of level $n$,
\textit{Compositio Math.} \textbf{71} (1989), 285--294.

\bibitem{Kel}
W.A. Kelly, Private communication, 2017.

\bibitem{LamThiSwi}
C.W.H. Lam, L. Thiel and S. Swiercz, The non-existence of finite projective planes of order 10,
\textit{Canad. J. Math. } \textbf{41} (1989), 1117--1123.

\bibitem{Mil}
W.H. Mills, Covering designs. I. Coverings by a small number of subsets,
\textit{Ars Combin.} \textbf{8} (1979) 199--315.

\bibitem{Sti}
D.R. Stinson, \textit{Combinatorial Designs: Constructions and Analysis}, Springer (2004).

\bibitem{ZhaTiaFidKel}
Y-F. Zhang, Y-C. Tian, C. Fidge and W. Kelly, Data-aware task scheduling for all-to-all comparison problems in heterogeneous distributed systems,
\textit{J. Parallel Distrib. Comput.} \textbf{93} (2016), 87--101.

\bibitem{ZhaTiaKelFid}
Y-F. Zhang, Y-C. Tian, W. Kelly and C. Fidge, Scalable and efficient data distribution for distributed computing of all-to-all comparison problems,
\textit{Future Gener. Comput. Syst.} \textbf{67} (2017), 152--162.


\end{thebibliography}
\end{document}